\newtheorem{thm}{Theorem}[section]
\newtheorem{cor}[thm]{Corollary}
\newtheorem{lem}[thm]{Lemma}
\newtheorem{prop}[thm]{Proposition}
\theoremstyle{definition}
\theoremstyle{remark}
\newtheorem{rem}[thm]{\bf Remark}
\numberwithin{equation}{section}
\begin{document}
\title{Complex dimension of additive subgroups of $\mathbb{R}^{n}$}

\author{Yahya N'dao and Adlene Ayadi}

 \address{Yahya N'dao, University of Moncton, Department of mathematics and statistics, Canada}
 \email{yahiandao@yahoo.fr }
\address{Adlene Ayadi, University of Gafsa, Faculty of sciences, Department of Mathematics,Gafsa, Tunisia.}

\email{adlenesoo@yahoo.com}

\thanks{This work is supported by the research unit: syst\`emes dynamiques et combinatoire:
99UR15-15} \subjclass[2000]{15A03,15A04, 20G07, 13F07,06B05,
37C85} \keywords{complex dimension, additive group, closed group,
homomorphism, vector space}

\begin{abstract}
In this paper, we define the complex dimension of any additive
subgroup of $\mathbb{R}^{n}$ which generalize the euclidien
dimension given for the vector space. We give an explicit method
to calculate this dimension.
\end{abstract}
\maketitle

\section{\bf Introduction }
The additive groups are seen as weaker than vector space
structures. The stability by scalar multiplication in an additive
group is not totally or non-existent in general. The linear
independence defined the dimension of vector spaces. By analogy,
we define the dimension of a discrete additive group as a
dimension of the vector space that can generates.  As if we assume
that it is stable by virtual scalar multiplication. For this
reason, this dimension is given purely complex. The difference
between these two structures is given by the complex dimension
which will be defined in the following.

In \cite{mW} M.Waldschmidt gave the form of any closed subgroup
$F$ of $\mathbb{R}^{n}$ as $F=E+D$ with $E$ is a vector space and
$D$ is a discrete additive group. By Theorem 2.1, in \cite{mW},
this means that there is a basis $(u_{1},\dots, u_{n})$ of
$\mathbb{R}^{n}$ and $0\leq r, p\leq n$ with $p+r\leq n$ such that
$$F:=\left\{\begin{array}{c}
  \underset{k=1}{\overset{p}{\sum}}\mathbb{R}u_{k}+
\underset{k=p+1}{\overset{p+r}{\sum}}\mathbb{Z}u_{k},\ \
\mathrm{if}\ p>0\ \mathrm{and}\ r>0\\
  \underset{k=1}{\overset{p}{\sum}}\mathbb{R}u_{k},\ \
\mathrm{if}\ r=0 \ \ \ \ \ \ \ \  \ \ \ \ \ \ \ \ \ \ \ \ \ \ \ \
\ \ \ \
\\ \underset{k=1}{\overset{r}{\sum}}\mathbb{Z}u_{k},\ \
\mathrm{if}\ p=0\ \ \ \ \ \ \ \ \ \ \ \ \ \ \ \ \ \ \ \ \ \ \ \ \
\ \ \
\end{array}\right.$$
 Let $W$ be
the vector space generated by $D$. We define the complex dimension
of $F$ as $\widetilde{\mathrm{dim}}(F)=\mathrm{dim}(E)+i
\mathrm{dim}(W)$ (i.e. $\widetilde{\mathrm{dim}}(F)=p+ir$). We
want to define a new dimension of the discrete groups of
$\mathbb{R}^n$ which generalizes that given as a manifold. This
dimension can not be rational for the discrete group, for example
if $u,v$ are free in $\mathbb{R}^{n}$ then
$dim(\mathbb{Z}u)=dim(\mathbb{Z}u+\mathbb{Z}v)=0$  as manifolds,
We can not distinguish between them, but with the complex
dimension we have $\widetilde{\mathrm{dim}}(\mathbb{Z}u)=i$ and
$\widetilde{\mathrm{dim}}(\mathbb{Z}u + \mathbb{Z}v) = 2i$. For
any subset $A$ of $\mathbb{R}^{n}$, denote by $vect(A)$ the vector
subspace of $\mathbb{R}^{n}$ generated by $A$. Therefore the
dimension of any closed subgroup of $\mathbb{R}^{n}$ (as a
manifold) is equal the real part of the complex dimension. As a
manifold we can verify that
 $$dim(F)=\mathrm{Max}\{dim(V);\ V\
\mathrm{vector}\ \mathrm{ space}\ V\subset F\}.$$  For any
additive subgroup $H$ of $\mathbb{R}^{n}$, we call the
\emph{complex dimension} of $H$ the number
$\widetilde{\mathrm{dim}}(H):=p+i(q-p)$ where
$$p=\mathrm{Max}\{dim(V);\ V\ \mathrm{vector}\ \mathrm{ space}\
V\subset H\}\ \ \ \ \mathrm{and}$$ $$q=Min\{dim(V);\ V\
\mathrm{vector}\ \mathrm{ space}\ H\subset V\}.\ \ \ \ \ \ \ \ $$
This means that $q=dim(vect(H))$.

 For a number $z\in\mathbb{C}$, we write $z = \Re(z)+ i
\Im(z)$, where $\Re(z)$ and $\Im(z)\in \mathbb{R}$. \ \\ The
$\Re(\widetilde{\mathrm{dim}}(H))$ represents the dimension of the
vector space generated by all points of $H$ which are stable by
scalar multiplication and $\Im(\widetilde{\mathrm{dim}}(H))$
represents the dimension of the vector space generated by all
points of $H$ which are stable only by addition.

 Our principal results are
the following:
\medskip

\begin{thm}\label{t:1} Let $H$ be an additive subgroup of
$\mathbb{R}^{n}$. If $\overline{H}=E+D$ with $E$ is a vector space
and $D$ is a discrete additive group such that $vect(D)\oplus
E=vect(H)$, then $H=H_{1}+D$ with $H_{1}$ is an additive subgroup
dense in $E$.
\end{thm}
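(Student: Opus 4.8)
The plan is to convert the direct-sum hypothesis $vect(D)\oplus E=vect(H)$ into a continuous linear projection that splits $H$ along $E$ and $vect(D)$, to check this projection carries $H$ onto $D$, and then to peel $D$ off $H$ and recognise the residual subgroup as a dense subgroup of $E$. Concretely, first I would introduce the linear projections $p\colon vect(H)\to E$ and $\pi\colon vect(H)\to vect(D)$ determined by $vect(H)=E\oplus vect(D)$, so that $p+\pi=\mathrm{id}$ and $\ker\pi=E$; both are continuous, $vect(H)$ being finite dimensional. From $\overline H=E+D$ and linearity, $\pi(\overline H)=\pi(E)+\pi(D)=D$, so $\pi(H)\subseteq D$; and by continuity $D=\pi(\overline H)\subseteq\overline{\pi(H)}$, while $\overline{\pi(H)}\subseteq\overline D=D$ because $D$ is discrete, hence closed. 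Therefore $\overline{\pi(H)}=D$, and since a dense subset of a discrete set is the whole set, $\pi(H)=D$.

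Next I would arrange that $D$ itself lies in $H$. A discrete subgroup of $\mathbb R^{n}$ is free of finite rank; choosing a $\mathbb Z$-basis $d_{1},\dots,d_{r}$ of $D$ and lifting it, using the previous step, to elements $h_{j}\in H$ with $\pi(h_{j})=d_{j}$, the subgroup $D_{1}:=\mathbb Z h_{1}+\dots+\mathbb Z h_{r}\subseteq H$ is again discrete (the $h_{j}$ are $\mathbb R$-linearly independent because the $d_{j}$ are), satisfies $E\oplus vect(D_{1})=vect(H)$ (essentially because $\pi$ restricts to an isomorphism $vect(D_{1})\to vect(D)$), and fulfils $E+D_{1}=E+D=\overline H$ (each $h_{j}$ lies in $\overline H=E+D$, and each $d_{j}=h_{j}-p(h_{j})$ lies in $E+D_{1}$). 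Thus $(E,D_{1})$ again satisfies the hypotheses, and after replacing $D$ by $D_{1}$ I may assume $D\subseteq H$.

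Then I would finish as follows. Put $H_{1}:=H\cap E$, an additive subgroup. For $h\in H$ we have $\pi(h)\in D\subseteq H$, hence $p(h)=h-\pi(h)\in H\cap E=H_{1}$, and $h=p(h)+\pi(h)$ gives $H=H_{1}+D$ (the reverse inclusion being clear). To see $H_{1}$ is dense in $E$, take any $e\in E\subseteq\overline H$ and a sequence $h_{k}\in H$ with $h_{k}\to e$; by continuity $\pi(h_{k})\to\pi(e)=0$, and since $\pi(h_{k})\in D$ with $D$ discrete, $\pi(h_{k})=0$ for all large $k$, so $h_{k}=p(h_{k})\in H_{1}$ eventually. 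Hence $e\in\overline{H_{1}}$, and as $H_{1}\subseteq E$ with $E$ closed, $\overline{H_{1}}=E$.

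The main obstacle will be this last density step, and what makes it run is precisely the transversality $E\cap vect(D)=\{0\}$ encoded in the hypothesis: it is this that allows $\pi$ to be a continuous projection with \emph{discrete} image, forcing the $vect(D)$-component of a sequence in $H$ tending into $E$ to vanish from some index on. Were $E+vect(D)$ not a direct sum, the "dense'' and "discrete'' directions of $H$ could entangle, as $\mathbb Z+\mathbb Z\sqrt2$ does inside $\mathbb R$, and no such decomposition would exist. A secondary technical point is the reduction to $D\subseteq H$ above, which is needed because a general decomposition $\overline H=E+D$ of Waldschmidt type (see \cite{mW}) need not have $D$ contained in $H$.
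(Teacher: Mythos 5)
Your proof is correct, but it takes a genuinely different --- and more careful --- route than the paper's. The paper simply sets $H_{1}=p_{1}(H)$ and $H_{2}=p_{2}(H)$ for the two projections attached to $vect(H)=E\oplus vect(D)$ and asserts $H=H_{1}+H_{2}$; that identity is false in general, since the sum of the two projections of a group usually contains the group strictly (already $p_{1}(H)+p_{2}(H)=\mathbb{Z}^{2}$ for $H=\mathbb{Z}(1,1)$ under the coordinate projections), so the paper's argument has a gap precisely where yours does extra work. Your two additional ideas --- first proving $\pi(H)=D$ by combining continuity of $\pi$ with discreteness of $D$, then lifting a $\mathbb{Z}$-basis of $D$ into $H$ so as to replace $D$ by a lattice $D_{1}\subseteq H$ and take $H_{1}=H\cap E$ --- are exactly what is needed to make the decomposition genuine, and your density argument (the $vect(D)$-component of a sequence of elements of $H$ converging into $E$ must vanish from some index on) is sound. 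The one point you should make explicit is that your conclusion is $H=H_{1}+D_{1}$, not $H=H_{1}+D$ for the $D$ given in the hypothesis; this weakening is unavoidable, because the literal statement fails: for $H=\mathbb{Z}(1,0)+\mathbb{Z}(\sqrt{2},0)+\mathbb{Z}(0,1)$ in $\mathbb{R}^{2}$, the choices $E=\mathbb{R}(1,0)$ and $D=\mathbb{Z}(\tfrac{1}{2},1)$ satisfy all the hypotheses ($E+D=\mathbb{R}\times\mathbb{Z}=\overline{H}$ and $vect(D)\oplus E=\mathbb{R}^{2}$), yet no subgroup $H_{1}$ of $E$ gives $H=H_{1}+D$, since comparing the slices of $H$ at second coordinate $0$ and $1$ would force $\tfrac{1}{2}\in\mathbb{Z}+\mathbb{Z}\sqrt{2}$. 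Since the paper only ever invokes the existence of some decomposition $H=H_{1}+D$ with $H_{1}$ dense in $E$ and $D$ discrete, your version is the right statement to prove, and your proof both establishes it and repairs the defect in the paper's own argument.
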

\medskip

\begin{cor}\label{C:CC01} If $\Im(\widetilde{\mathrm{dim}}(H))\neq
0$ then $H$ can not be dense in $\mathbb{R}^{n}$.
\end{cor}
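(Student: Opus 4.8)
The plan is to establish the contrapositive: \emph{if $H$ is dense in $\mathbb{R}^{n}$, then $\Im(\widetilde{\mathrm{dim}}(H))=0$.} Writing $\widetilde{\mathrm{dim}}(H)=p+i(q-p)$ as in the definition, with $p=\mathrm{Max}\{\dim(V):\ V\subset H,\ V\text{ a vector subspace}\}$ and $q=\dim(vect(H))$, this amounts to showing $p=q$. Since $H$ is dense, $vect(H)=\mathbb{R}^{n}$, so $q=n$, and the task reduces to proving $p=n$, i.e. that the maximal vector subspace contained in $H$ is $n$-dimensional.

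First I would apply Waldschmidt's structure theorem (Theorem 2.1 of \cite{mW}, recalled above) to the closed group $\overline{H}=\mathbb{R}^{n}$: there is a basis $(u_{1},\dots,u_{n})$ of $\mathbb{R}^{n}$ and integers $p',r\geq 0$ with $\overline{H}=\sum_{k=1}^{p'}\mathbb{R}u_{k}+\sum_{k=p'+1}^{p'+r}\mathbb{Z}u_{k}$. If $r\geq 1$ then $\tfrac12 u_{p'+1}\notin\overline{H}$ by linear independence of the $u_{k}$, contradicting $\overline{H}=\mathbb{R}^{n}$; hence $r=0$, and then necessarily $p'=n$, so in the notation $\overline{H}=E+D$ we have $E=\mathbb{R}^{n}$, $D=\{0\}$, and $vect(D)\oplus E=\mathbb{R}^{n}=vect(H)$. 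Theorem \ref{t:1} now applies and yields $H=H_{1}+D=H_{1}$, where $H_{1}$ is an additive subgroup dense in $E=\mathbb{R}^{n}$.

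The remaining, and decisive, step is to conclude from this that $p=n$ — equivalently, that the largest vector subspace contained in $H$ has the same dimension as the largest vector subspace contained in $\overline{H}$, namely $E=\mathbb{R}^{n}$. This is precisely where Theorem \ref{t:1} is meant to be used in an essential way: it has reduced the problem to understanding the dense subgroup $H_{1}$ sitting inside the vector space $E$, and one must argue that such an $H_{1}$ already carries a full-dimensional vector subspace of $E$. I expect this identification to be the main obstacle of the proof; the rest is formal, since $p\leq\dim E$ is automatic (any vector subspace of $H$ lies in $\overline{H}$, hence in its maximal vector subspace $E$) and the case $\dim(vect(H))<n$ is excluded by density. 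Granting $p=n$, we get $\Im(\widetilde{\mathrm{dim}}(H))=q-p=n-n=0$; contraposing, $\Im(\widetilde{\mathrm{dim}}(H))\neq 0$ forces $H$ to be non-dense in $\mathbb{R}^{n}$.
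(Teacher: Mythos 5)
Your reduction is carried out correctly up to the point you yourself flag: Waldschmidt's theorem forces $E=\mathbb{R}^{n}$ and $D=\{0\}$ when $\overline{H}=\mathbb{R}^{n}$, and Theorem~\ref{t:1} then gives $H=H_{1}$ dense in $\mathbb{R}^{n}$. But the ``decisive step'' you defer --- that a dense subgroup of $\mathbb{R}^{n}$ must contain an $n$-dimensional vector subspace, i.e.\ $p=n$ --- is not a gap that can be closed: it is false. Take $n=1$ and $H=\mathbb{Z}+\mathbb{Z}\sqrt{2}$. This $H$ is dense in $\mathbb{R}$ and $q=\dim(vect(H))=1$, yet $H$ is countable and hence contains no vector subspace other than $\{0\}$, so $p=0$ and $\Im(\widetilde{\mathrm{dim}}(H))=q-p=1\neq 0$. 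Under the literal definition of $\widetilde{\mathrm{dim}}(H)$ given in the introduction, the corollary itself fails, so no argument along your lines (or any other) can complete the step you left open.

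For comparison, the paper's own proof is the single sentence that the corollary ``follows directly from Theorem~\ref{t:1}'', which only makes sense if one silently reads $\Im(\widetilde{\mathrm{dim}}(H))$ as $\dim(vect(D))$ in the decomposition $\overline{H}=E+D$ of the \emph{closure} --- i.e.\ as an invariant of $\overline{H}$ rather than of $H$. That is indeed how the quantity is handled in the proof of Corollary~\ref{Ct:1}(i), where the imaginary part is asserted to be unchanged on passing to the closure (again contradicting the example above). Under that closure-based reading the statement is immediate and needs neither Theorem~\ref{t:1} nor your projection argument: $H$ dense gives $\overline{H}=\mathbb{R}^{n}$, hence $D=\{0\}$ in Waldschmidt's decomposition and the imaginary part vanishes. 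So the verdict is: your first three steps reproduce the paper's route faithfully, your fourth step correctly isolates the missing link, and that link is irreparable unless the definition of $\Im(\widetilde{\mathrm{dim}}(H))$ for non-closed $H$ is replaced by the one computed from $\overline{H}$.
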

\medskip

\begin{cor}\label{Ct:1} Let $H$ and $K$ be two additive subgroups of
$\mathbb{R}^{n}$. One has:\ \\ (i) If
$\widetilde{\mathrm{dim}}(H)=p+ir$ then
$\widetilde{\mathrm{dim}}(\overline{H})=p'+ir$ with $p'\geq p$. \
\\ (ii) If $K\subset H$ then
$|\widetilde{\mathrm{dim}}(K)|\leq
|\widetilde{\mathrm{dim}}(H)|$.\
\end{cor}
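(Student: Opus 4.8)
The plan is to lean on Waldschmidt's structure theorem together with Theorem~\ref{t:1}, reducing both statements to bookkeeping about the two pieces of the decomposition $\overline{H}=E+D$ and about the monotonicity of the two quantities $p,q$ that enter the definition of $\widetilde{\dim}$.

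For part (i), write $\widetilde{\dim}(H)=p+ir$, so that $p$ is the dimension of the largest vector subspace contained in $H$ and $q:=p+r=\dim\bigl(vect(H)\bigr)$. I would first observe that passing to the closure does not change the ambient span: $vect(H)$ is a linear subspace of $\mathbb{R}^{n}$, hence closed, and it contains $H$, so it contains $\overline{H}$; the reverse inclusion being obvious, $vect(\overline{H})=vect(H)$ and in particular $\dim\bigl(vect(\overline{H})\bigr)=q$. Next, by Waldschmidt's theorem choose a decomposition $\overline{H}=E+D$ with $E$ a vector space, $D$ discrete, and $E\oplus vect(D)=vect(\overline{H})$ (available from the normal form). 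Projecting $\mathbb{R}^{n}$ along $E$ onto $vect(D)$ carries any vector subspace $V\subseteq\overline{H}$ to a vector subspace of the discrete group $D$, which can only be $\{0\}$; hence $V\subseteq E$, so $E$ is the largest vector subspace of $\overline{H}$ and $\Re\bigl(\widetilde{\dim}(\overline{H})\bigr)=\dim E=:p'$. Since $H\subseteq\overline{H}$, every vector subspace of $H$ is one of $\overline{H}$, so $p\le p'$, which is the asserted inequality. There remains the identification of $\Im\bigl(\widetilde{\dim}(\overline{H})\bigr)=q-p'$ with $r=q-p$, and this is the one genuinely delicate point: it is exactly here that Theorem~\ref{t:1} must be used, writing $H=H_{1}+D$ with $H_{1}$ dense in $E$ and controlling how large a vector subspace of $E$ can be seen by $H_{1}+D$. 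I expect this equality of imaginary parts to be the main obstacle of the corollary.

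For part (ii), assume $K\subseteq H$. Monotonicity is immediate in the two defining quantities: a vector subspace of $K$ is a vector subspace of $H$, so $p_{K}\le p_{H}$; and $vect(K)\subseteq vect(H)$, so $q_{K}\le q_{H}$. Since $|\widetilde{\dim}(K)|^{2}=p_{K}^{2}+(q_{K}-p_{K})^{2}$ and similarly for $H$, the claim becomes $p_{K}^{2}+(q_{K}-p_{K})^{2}\le p_{H}^{2}+(q_{H}-p_{H})^{2}$. On the admissible region $0\le p\le q$ the function $g(p,q)=p^{2}+(q-p)^{2}$ attains its largest values at the corners $p=0$ and $p=q$, where $g=q^{2}$; I would combine this with the monotonicity above and with the relation between the discrete parts of $K$ and $H$ obtained by applying Theorem~\ref{t:1} to the inclusion $\overline{K}\subseteq\overline{H}$. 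The obstruction here is that the imaginary part $q-p$, i.e.\ the rank of the discrete part, is \emph{not} monotone under inclusion --- a line already contains a rank-one lattice --- so the two coordinates of $\widetilde{\dim}$ cannot be compared separately, and the estimate must genuinely use how the discrete part of $K$ is positioned relative to that of $H$.
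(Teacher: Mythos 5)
Your reduction of both parts to the monotonicity of the two defining quantities is set up correctly, and everything you actually prove (that $vect(\overline{H})=vect(H)$, that $E$ is the largest vector subspace of $\overline{H}$, and the inequalities $p\le p'$, $p_{K}\le p_{H}$, $q_{K}\le q_{H}$) is sound. But the proposal is not a proof: in part (i) you stop at the "identification of $q-p'$ with $q-p$", and in part (ii) you stop at the observation that $q-p$ is not monotone under inclusion. Those are precisely the two steps that carry all the content of the corollary, and neither is closed. Moreover, they cannot be closed, because with the definition $\widetilde{\mathrm{dim}}(H)=p+i(q-p)$ given in the introduction both statements are false. For (i), take $H=\mathbb{Z}+\mathbb{Z}\sqrt{2}\subset\mathbb{R}$: then $p=0$, $q=1$, so $\widetilde{\mathrm{dim}}(H)=i$, while $\overline{H}=\mathbb{R}$ gives $\widetilde{\mathrm{dim}}(\overline{H})=1$; the imaginary part drops from $1$ to $0$. (Indeed, since $q$ is unchanged under closure, equality of imaginary parts would force $p'=p$, so (i) can only hold in the degenerate case it implicitly excludes.) For (ii), take $K=\mathbb{Z}e_{1}+\mathbb{Z}e_{2}\subset H=\mathbb{R}e_{1}+\mathbb{Z}e_{2}$ in $\mathbb{R}^{2}$, both closed: $\widetilde{\mathrm{dim}}(K)=2i$ and $\widetilde{\mathrm{dim}}(H)=1+i$, so $|\widetilde{\mathrm{dim}}(K)|=2>\sqrt{2}=|\widetilde{\mathrm{dim}}(H)|$.

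For comparison, the paper's own proof hits the same two walls and simply walks through them: in (i) it asserts that the imaginary part of $\widetilde{\mathrm{dim}}(H)$ equals $\dim(vect(D))$, which holds only when the real parts of $H$ and $\overline{H}$ already coincide; in (ii) it deduces $r'\le r$ from $p'\le p$ and $p'+r'\le p+r$, which is a non sequitur (take $p'=0$, $p=1$, $r'=2$, $r=1$ --- exactly the configuration of the counterexample above). So your diagnosis of where the difficulty sits is exactly right, and your refusal to claim the estimate is to your credit; the honest conclusion is not that a cleverer argument about the position of the discrete parts is needed, but that the statement itself must be revised (for instance by defining the complex dimension of a non-closed group through its closure, or by weakening (i) to a comparison of real parts only) before any proof can succeed.
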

\medskip

\begin{rem}\label{c:1} (i) $H$ is closed and discrete if and only if
$\Re(\widetilde{\mathrm{dim}}(H))=0$.\ \\ (ii) If $E$ is a vector
space then $\widetilde{\mathrm{dim}}(E)=dim(E)$.\ \\ (iii) If $E$
is a connected component of $\overline{H}$ containing $0$ then
$\widetilde{\mathrm{dim}}(E)\in \mathbb{N}$.
\end{rem}
\bigskip

We will use the following notations and definitions to
characterize the density of any subgroup $H$ of $\mathbb{R}^{n}$.\
\\
Let $u_{1},\dots, u_{p}\in \mathbb{R}^{n}$, $p\geq n+1$. Suppose
that $(u_{1},\dots, u_{n})$ be a basis of $\mathbb{R}^{n}$ and
$u_{k}=\underset{i=1}{\overset{n}{\sum}}\alpha_{k,i}u_{i}$ such
that $1,\alpha_{k,k_{1}}\dots,\alpha_{k,k_{r_{k}}}$ is the longest
list of $\{1,\alpha_{k,1},\dots, \alpha_{k,n}\}$ that are
rationally independent. Denote by $I_{k}=\{k_{i}:\ \  1\leq i \leq
r_{k}\}$. Then for every  $j\notin I_{k}$, there exists
$\gamma^{(k)}_{j,1},\dots,
\gamma^{(k)}_{j,r_{k}},t_{k}\in\mathbb{Q}$ such that
$\alpha_{k,j}=t_{k,j}+\underset{i\in I_{k}
}{\sum}\gamma^{(k)}_{j,i}\alpha_{k,i}$.

 We obtain:
\begin{align*} u_{k}& =
\underset{j=1}{\overset{n}{\sum}}\alpha_{k,j}u_{j}\\ \ & =
\underset{j\in I_{k}}{\sum}\alpha_{k,j}u_{j}+ \underset{j\notin
I_{k}}{\sum}\left(t_{k,j}+\underset{i\in
I_{k}}{\sum}\gamma^{(k)}_{j,i}\alpha_{k,i}\right)u_{j}
\\ \ & =\underset{j\in
I_{k}}{\sum}\alpha_{k,j}u_{j}+ \underset{i\in
I_{k}}{\sum}\left(\underset{j\notin
I_{k}}{\sum}\gamma^{(k)}_{j,i}\alpha_{k,i}u_{j}\right)+\underset{j\notin
I_{k}}{\sum}t_{k,j}u_{j}\\ \ & =\underset{j\in
I_{k}}{\sum}\alpha_{k,j}\left(u_{j}+ \underset{i\notin
I_{k}}{\sum}\gamma^{(k)}_{i,j}u_{i}\right)+\underset{j\notin
I_{k}}{\sum}t_{k,j}u_{j}\\
\end{align*}

Let $q\in \mathbb{N}^{*}$ and $m^{(k)}_{j,i},p_{k,j}\in
\mathbb{Z}$ such that $t_{k,j}=\frac{p_{k,j}}{q}$ and
$\gamma^{(k)}_{i,j}=\frac{m^{(k)}_{i,j}}{q}$ for every $k\notin
I_{k}$. Therefore,
\begin{equation}\label{eq02}
qu_{k}=\underset{j\in
I_{k}}{\sum}\alpha_{k,j}\left(qu_{j}+\underset{i\notin I_{k}
}{\sum}m^{(k)}_{i,j}u_{i}\right)+\underset{j\notin I_{k}
}{\sum}p_{k,j}u_{j}.
\end{equation}

Denote by $u'_{k,j}= qu_{j}+\underset{i\notin I_{k}
}{\sum}m^{(k)}_{i,j}u_{i}$ for every $n+1\leq k\leq p$ and $j\in
I_{k}$. See that
\begin{equation}\label{eq04}
u'_{k,j}\in \underset{i=1}{\overset{n}{\sum}}\mathbb{Z}u_{i}\ \ \
\end{equation}

Denote by $F$ the vector space generated by all $u'_{k,j}$, $j\in
I_{k}$, $n+1\leq k \leq p$.  Define the matrix $M_{H}$ formed by
all $u'_{k,j}$, $j\in I_{k}$, $n+1\leq k \leq p$ as colon vectors.

\bigskip

\
\\
{\bf Example:1.} Let $H=\mathbb{Z}u_{1}+\dots +\mathbb{Z}u_{7}$
with $u_{1}=[1, 0, 0]^{T}$, $u_{2}=[0,1, 0]^{T}$,
$u_{3}=[0,0,1]^{T}$, $u_{4}=[1,\sqrt{2}, 1]^{T}$, $u_{5}=[0, 1,
\sqrt{3}]^{T}$, $u_{6}=[\sqrt{2}, \sqrt{3},1]^{T}$, $u_{7}=[1,
\sqrt{2}, \sqrt{2}]^{T}$.  For : \ \\ - $k=4$, we have
$\{\sqrt{2}\}$ with $1$ forms the  longest list of
$\{1,1,\sqrt{2}, 1\}$ that are rationally independent. \ \\ -
$k=5$, we have $\{\sqrt{3}\}$ with $1$ forms the  longest list of
$\{1,0,1, \sqrt{3}\}$ that are rationally independent. \ \\ -
$k=6$, we have $\{\sqrt{2}, \sqrt{3}\}$ with $1$ forms the longest
list of $\{1,\sqrt{2},\sqrt{3}, 1\}$ that are rationally
independent.\ \\ - $k=7$, we have $\{\sqrt{2}\}$ with $1$ forms
the longest list of $\{1,1,\sqrt{2}, \sqrt{2}\}$ that are
rationally independent.\ \\
\\ We can choose $I_{4}=\{2\}$, $I_{5}=\{3\}$, $I_{6}=\{1,2\}$ and
$I_{7}=\{2\}$. We obtain:

\begin{align*}
  u_{4} & = \sqrt{2}u_{2}+ (u_{1}+u_{3}) &   \ \ \  I_{4}=\{2\}\\
   u_{5} &=\sqrt{3}u_{3}+ (u_{2}) & \ \ \ \ I_{5}=\{3\} \\
   u_{6}& =\sqrt{2}u_{1}+\sqrt{3}u_{2}+ (u_{3}) & \ \ \ \ I_{6}=\{1,2\} \\
   u_{7}& =\sqrt{2}(u_{2}+u_{3})+ (u_{1})&\ \ \ \ I_{7}=\{2\}\
\end{align*}

Therefore
\begin{align*}
  u'_{4,2} & =u_{2}=[0,1,0]^{T}  \\
   u'_{5,3} &=u_{3}=[0,0,1]^{T}  \\
   u'_{6,1}& =u_{1}=[1,0,0]^{T}\\
   u'_{6,2}& =u_{2}=[0,1,0]^{T}\\
   u'_{7,2}& =u_{2}+u_{3}=[0,1,1]^{T} \
\end{align*}
Then $$M_{H}=\left[\begin{array}{ccccc}
  0 & 0 & 1 & 0 &0\\
  1 & 0 & 0 & 1 &1\\
  0 & 1 & 0 & 0& 1
\end{array}\right]$$ Since $rank(M_{H})=3$ then $dim(F)=3$ and so
$F=\mathbb{R}^{3}$.

\bigskip

\
\\
{\bf Example:2} Let $H=\mathbb{Z}u_{1}+\dots +\mathbb{Z}u_{7}$
with  $u_{1}=[1, 0, 0]^{T}$, $u_{2}=[0,1, 0]^{T}$,
$u_{3}=[0,0,1]^{T}$, $u_{4}=[1,\sqrt{2}, 1]^{T}$,
$u_{5}=[\sqrt{2}, 1, \sqrt{2}]^{T}$, $u_{6}=[2\sqrt{2},
2,3\sqrt{2}]^{T}$, $u_{7}=[1, 3\sqrt{2}, \sqrt{2}]^{T}$. For every
$4\leq k \leq 7$ we have $\{\sqrt{2}\}$ with $1$ forms the longest
list of $\{1,\alpha_{k,1},\alpha_{k,2}, \alpha_{k,3}\}$ that are
rationally independent. We can choose $I_{4}=\{2\}$,
$I_{5}=\{1\}$, $I_{6}=\{1\}$ and $I_{7}=\{2\}$. We obtain:

\begin{align*}
  u_{4} & = \sqrt{2}u_{2}+ (u_{1}+u_{3}) &   \ \ \  I_{4}=\{2\}\\
   u_{5} &=\sqrt{2}(u_{1}+ u_{3})+ (u_{2}) & \ \ \ \ I_{5}=\{1\} \\
   u_{6}& =\sqrt{2}(2u_{1}+3u_{3})+ (2u_{2}) & \ \ \ \ I_{6}=\{1\} \\
   u_{7}& =\sqrt{2}(3u_{2}+u_{3})+ (u_{1})&\ \ \ \ I_{7}=\{2\}\
\end{align*}

Therefore

\begin{align*}
  u'_{4,2} & =u_{2}=[0,1,0]^{T} \\
   u'_{5,1} &=u_{1}+u_{3}=[1,0,1]^{T}\\
   u'_{6,1}& =2u_{1}+3u_{3}=[2,0,3]^{T}\\
   u'_{7,2}& =3u_{2}+u_{3}=[0,3,1]^{T}
\end{align*}
Then $$M_{H}=\left[\begin{array}{cccc}
  0 & 1 & 2 & 0 \\
  1 & 0 & 0 & 3 \\
  0 & 1 & 3 & 1
\end{array}\right]$$ Since $rank(M_{H})=3$ then $dim(F)=3$ and so
$F=\mathbb{R}^{3}$.
\
\bigskip

Denote by $L(M_{H})=rank(M_{H})$.
\medskip

\begin{thm}\label{t:2} Let $H$ be an additive subgroup of
$\mathbb{R}^{n}$ generated by $u_{1},\dots, u_{m}$. Then
$\widetilde{\mathrm{dim}}(\overline{H})= L(M_{H})+i(q-L(M_{H}))$,
where $q=dim(vect(H))$.
\end{thm}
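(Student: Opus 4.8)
The plan is to compare $\overline H$ with its Waldschmidt decomposition $\overline H=E+D$ ($E$ a vector space, $D$ discrete, from \cite{mW}) and to identify $\dim E$ with $\dim F=L(M_H)$. At the outset I would reduce to the situation in which the construction preceding the statement takes place: restricting to $vect(H)$ and reordering the generators, we may assume $(u_1,\dots,u_n)$ is a basis of $\mathbb R^n$, so $q=n$ and $\Lambda:=\sum_{i=1}^n\mathbb Zu_i\subseteq H$ is a lattice (the general case is the same argument carried out inside $vect(H)$). Since $vect(\overline H)=vect(H)$ we have $vect(D)\oplus E=vect(H)$, hence $\overline H/E\cong D\cong\mathbb Z^{\,r}$ and any vector subspace contained in $\overline H$ maps to a divisible — hence trivial — subgroup of $\mathbb Z^{\,r}$; thus $E$ is the largest vector subspace contained in $\overline H$ (equivalently, $E=(\overline H)^{0}$). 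Consequently $\widetilde{\mathrm{dim}}(\overline H)=\dim E+i\bigl(q-\dim E\bigr)$, and the theorem reduces to proving $\dim E=L(M_H)$.

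\emph{Lower bound $F\subseteq\overline H$.} Fix $k$ with $n+1\le k\le m$. Using $u'_{k,j}=qu_j+\sum_{i\notin I_k}m^{(k)}_{i,j}u_i$ I would check, by a block-triangular determinant computation, that the $n$ vectors $\{u'_{k,j}:j\in I_k\}\cup\{u_j:j\notin I_k\}$ have matrix of determinant $\pm q^{r_k}\neq0$ in the basis $(u_1,\dots,u_n)$; hence they are a basis of $\mathbb R^n$ and generate a lattice $\Lambda_k\subseteq\Lambda\subseteq H$. In these coordinates, relation \eqref{eq02} reads: $qu_k$ has coordinates $(\alpha_{k,j})_{j\in I_k}$ along $W_k:=vect\{u'_{k,j}:j\in I_k\}$ and integer coordinates along the remaining axes, so $qu_k\equiv(\alpha_{k,j})_{j\in I_k}\pmod{\Lambda_k}$. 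Since $1$ together with $(\alpha_{k,j})_{j\in I_k}$ are rationally independent, Kronecker's theorem shows that the closure of the cyclic group generated by the image of $qu_k$ in $\mathbb R^{n}/\Lambda_k$ is the subtorus $W_k/(W_k\cap\Lambda_k)$; pulling back, $\overline{\,\mathbb Z(qu_k)+\Lambda_k\,}\supseteq W_k+\Lambda_k$. As $qu_k\in H$ and $\Lambda_k\subseteq H$, this gives $W_k\subseteq\overline H$ for every such $k$, hence $F=\sum_k W_k\subseteq\overline H$ and therefore $F\subseteq E$, so $\dim E\ge L(M_H)$.

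\emph{Upper bound.} From \eqref{eq02}, $u_k=\tfrac1q\sum_{j\in I_k}\alpha_{k,j}u'_{k,j}+\tfrac1q\sum_{j\notin I_k}p_{k,j}u_j\in F+\tfrac1q\Lambda$, while $u_i\in\Lambda\subseteq\tfrac1q\Lambda$ for $i\le n$; hence $H\subseteq F+\tfrac1q\Lambda$. By \eqref{eq04} each $u'_{k,j}$ lies in $\Lambda$, so $F$ is spanned by lattice vectors, $F\cap\tfrac1q\Lambda$ is a full lattice in $F$, and therefore $F+\tfrac1q\Lambda$ is a closed subgroup of $\mathbb R^n$ whose largest vector subspace is $F$. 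Thus $\overline H\subseteq F+\tfrac1q\Lambda$, so $E\subseteq F$ and $\dim E\le L(M_H)$. Combining the two bounds, $E=F$, $\dim E=L(M_H)$, and $\widetilde{\mathrm{dim}}(\overline H)=L(M_H)+i\bigl(q-L(M_H)\bigr)$.

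The main obstacle I expect is the lower bound: one must select the auxiliary lattice $\Lambda_k$ so that \eqref{eq02} genuinely becomes a Kronecker statement about a single orbit in $\mathbb R^n/\Lambda_k$, and then transport the density of that orbit — which lives only in a subtorus — up to the linear span $W_k$; Theorem \ref{t:1} fits here as a consistency check, since it asserts exactly that the dense part of $H$ fills out the vector space $E$. The upper bound is then essentially a formal closedness argument, the only point being that passing from $\Lambda$ to $\tfrac1q\Lambda$ absorbs the denominators produced by \eqref{eq02}.
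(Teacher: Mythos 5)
Your proposal is correct, but it follows a genuinely different route from the paper. The paper's proof changes basis via Lemma~\ref{L:02} to a basis adapted to the Waldschmidt decomposition $\overline{H}=E+D$, uses Lemma~\ref{L:02020} to discard the generators lying outside $E$, and then invokes the density criterion of Proposition~\ref{L:aaaaa1} (cited from \cite{AM}) as a black box to conclude $L(M_{H})=\dim E$; the whole identification of $L(M_H)$ with $\dim E$ is thus outsourced to the external criterion. You instead prove the two inclusions $F\subseteq E$ and $E\subseteq F$ directly: the lower bound by exhibiting, for each $k$, the auxiliary lattice $\Lambda_k$ generated by $\{u'_{k,j}:j\in I_k\}\cup\{u_j:j\notin I_k\}$ (full rank by the block-triangular determinant $\pm q^{r_k}$) and applying Kronecker's theorem to the orbit of $qu_k$ in $\mathbb{R}^n/\Lambda_k$, whose coordinates $(\alpha_{k,j})_{j\in I_k}$ are rationally independent together with $1$ precisely by the choice of $I_k$; the upper bound by noting $H\subseteq F+\tfrac1q\Lambda$, which is closed because $F$ is spanned by lattice vectors. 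This in effect re-proves Proposition~\ref{L:aaaaa1} rather than citing it, and it buys you more: a self-contained argument, the explicit identification $E=F$ (not just an equality of dimensions), and the preliminary observation that $E$ is the \emph{maximal} vector subspace of $\overline{H}$ (via divisibility of the image in $D\cong\mathbb{Z}^r$), a point the paper uses implicitly but never justifies. It also avoids the paper's somewhat fragile step of writing $P(H_1)$ as generated by a subfamily of the $w_k$ and passing from $L(M_{P(H_1)})$ to $L(M_H)$. The cost is length and the need to carry out the Kronecker and closedness verifications you flag at the end, both of which are standard.
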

\medskip

\begin{thm}\label{t:3} Let $H$ be an additive subgroup of
$\mathbb{R}^{n}$ with complex dimension $p+ir$. Then there exists
$u\in \mathbb{R}^{n}$ such that $H+\mathbb{Z}u$ is dense in
$vect(H)$. (i.e.
$\widetilde{\mathrm{dim}}(\overline{H+\mathbb{Z}u})=p+r$).
\end{thm}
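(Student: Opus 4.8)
\medskip

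\noindent\textbf{Proof plan.} First I would normalise the ambient space. Since $\overline{H+\mathbb{Z}u}\subseteq vect(H)$ whenever $u\in vect(H)$, I may replace $\mathbb{R}^{n}$ by $V_{0}:=vect(H)$ and assume $vect(H)=\mathbb{R}^{n}$, so that $q=n=p+r$; a $u\in V_{0}$ produced this way is in particular an element of the original $\mathbb{R}^{n}$. Next, by Theorem~\ref{t:1} (equivalently the description of a closed subgroup recalled above from \cite{mW}) I would write $\overline{H}=E+D$ with $E$ a vector space, $D$ a discrete subgroup, and $E\oplus W=\mathbb{R}^{n}$ where $W:=vect(D)$; comparing imaginary parts of complex dimensions through Corollary~\ref{Ct:1}(i) gives $\dim W=r$, so $D$ is a lattice of full rank $r$ in $W$, say $D=\mathbb{Z}v_{1}+\cdots+\mathbb{Z}v_{r}$ for some basis $(v_{1},\dots,v_{r})$ of $W$. (If $r=0$ then $\overline{H}=\mathbb{R}^{n}$ and $u=0$ works, so assume $r\geq 1$.)

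The second step is a chain of soft closure identities. From $H+\mathbb{Z}u\subseteq\overline{H}+\mathbb{Z}u\subseteq\overline{H+\mathbb{Z}u}$ I get $\overline{H+\mathbb{Z}u}=\overline{\,\overline{H}+\mathbb{Z}u\,}=\overline{E+D+\mathbb{Z}u}$. Writing $u=u_{E}+u_{W}$ along $E\oplus W$ one checks $E+D+\mathbb{Z}u=E+D+\mathbb{Z}u_{W}$, so I may take $u\in W$ from the outset. Since $E$ is closed and the two projections attached to $\mathbb{R}^{n}=E\oplus W$ are continuous, $\overline{E+(D+\mathbb{Z}u)}=E+\overline{D+\mathbb{Z}u}$, and this equals $\mathbb{R}^{n}$ exactly when $\overline{D+\mathbb{Z}u}=W$. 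Thus the theorem reduces to: \emph{find $u\in W$ with $D+\mathbb{Z}u$ dense in $W$.}

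To finish I would take $t_{1},\dots,t_{r}\in\mathbb{R}$ with $1,t_{1},\dots,t_{r}$ linearly independent over $\mathbb{Q}$ (such reals exist, e.g. $t_{i}=\sqrt{p_{i}}$ for distinct primes $p_{i}$) and set $u=\sum_{i=1}^{r}t_{i}v_{i}$. In the torus $W/D\cong\mathbb{R}^{r}/\mathbb{Z}^{r}$ the image $\bar u$ of $u$ generates a cyclic subgroup whose closure, by Kronecker's simultaneous approximation theorem, is the whole torus precisely because $1,t_{1},\dots,t_{r}$ are rationally independent (equivalently, the only continuous character of $W$ trivial on $D+\mathbb{Z}u$ is the trivial one). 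Pulling this back along the quotient map $W\to W/D$ gives $\overline{D+\mathbb{Z}u}=W$, whence $\overline{H+\mathbb{Z}u}=\mathbb{R}^{n}=vect(H)$, i.e. $\widetilde{\mathrm{dim}}(\overline{H+\mathbb{Z}u})=p+r$.

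I expect the only genuine obstacle to be this last density fact; everything else is linear algebra and continuity of projections. Kronecker's theorem can simply be quoted, or proved by induction on $r$: the base case $r=1$ is the classical density of $\mathbb{Z}+\mathbb{Z}t$ in $\mathbb{R}$ for $t$ irrational, and the inductive step projects $\mathbb{T}^{r}$ onto $\mathbb{T}^{r-1}$ and analyses a typical fibre.
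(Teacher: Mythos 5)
Your proof is correct, and it reaches the conclusion by a genuinely different route from the paper's. The paper's proof is a two-line reduction: it extracts from $H$ a basis $(u_{1},\dots,u_{q})$ of $vect(H)$, sets $u=\sum_{k}\alpha_{k}u_{k}$ with $1,\alpha_{1},\dots,\alpha_{q}$ rationally independent, and observes that already the subgroup $H'=\sum_{k}\mathbb{Z}u_{k}+\mathbb{Z}u\subseteq H+\mathbb{Z}u$ is dense in $vect(H)$ by Proposition~\ref{pp:2}, which in turn rests on the rank criterion of Proposition~\ref{L:aaaaa1}. You instead pass through the structure theorem $\overline{H}=E+D$, push $u$ into $W=vect(D)$ via the closure identities $\overline{H+\mathbb{Z}u}=\overline{\overline{H}+\mathbb{Z}u}=E+\overline{D+\mathbb{Z}u}$, and then apply classical Kronecker density in the torus $W/D$. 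Both arguments hinge on the same key input (a vector with rationally independent coordinates relative to a lattice), but they package it differently: the paper's version discards all of $H$ except a spanning set and needs the general rank criterion for $q+1$ generators, while yours exploits the decomposition of $\overline{H}$ so that only the $r$-dimensional discrete direction must be filled in, the perturbing vector $u$ can be taken inside $W$, and only the standard Kronecker theorem for $\mathbb{Z}^{r}\subset\mathbb{R}^{r}$ is needed. One small caveat: your appeal to Corollary~\ref{Ct:1}(i) to conclude $\dim W=r$ is inessential (and that corollary's claim that closure preserves the imaginary part is itself delicate, as $H=\mathbb{Z}+\mathbb{Z}\sqrt{2}$ in $\mathbb{R}$ shows); your construction works for whatever $\dim W$ happens to be, so you could simply drop that sentence.
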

\medskip

\begin{cor}\label{c:3}  Let $H$ be an additive subgroup of
$\mathbb{R}^{n}$ with complex dimension $p+ir$. If $p+r<n$ then
for every $u\in \mathbb{R}^{n}$, $H+\mathbb{Z}u$ can not be dense
in $\mathbb{R}^{n}$.
\end{cor}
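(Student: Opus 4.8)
The plan is to read off from the definition $\widetilde{\mathrm{dim}}(H)=p+ir$ the single fact we need, namely $\dim(vect(H))=p+r<n$, and to combine it with the elementary observation that adjoining one cyclic group $\mathbb{Z}u$ enlarges the linear span by at most one dimension, that extra direction being occupied only discretely. Write $W=vect(H)$ and $G=H+\mathbb{Z}u$; the goal is to show $\overline{G}\neq\mathbb{R}^{n}$. Since $\dim(vect(G))=\dim(W+\mathbb{R}u)\leq\dim(W)+1=p+r+1\leq n$, there are exactly two cases.

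If $u\in W$, then $G\subseteq W$, hence $\overline{G}\subseteq W$; and $W$ is a proper closed linear subspace of $\mathbb{R}^{n}$ because $\dim W=p+r<n$, so $G$ is not dense in $\mathbb{R}^{n}$.

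The substantive case is $u\notin W$. Here I would pick a nonzero linear form $\phi:\mathbb{R}^{n}\to\mathbb{R}$ with $\phi|_{W}\equiv 0$ and $\phi(u)=1$: such $\phi$ exists since $u\notin W$, so one takes a basis $w_{1},\dots,w_{q}$ of $W$, adjoins $u$, completes to a basis $w_{1},\dots,w_{q},u,v_{1},\dots,v_{n-q-1}$ of $\mathbb{R}^{n}$, and lets $\phi$ be the linear form sending $u$ to $1$ and all the $w_{i},v_{j}$ to $0$. Then for all $h\in H$, $k\in\mathbb{Z}$ one has $\phi(h+ku)=\phi(h)+k\phi(u)=k\in\mathbb{Z}$, so $\phi(G)\subseteq\mathbb{Z}$. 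As $\phi$ is continuous and $\mathbb{Z}$ is closed in $\mathbb{R}$, this yields $\phi(\overline{G})\subseteq\overline{\phi(G)}\subseteq\mathbb{Z}$; but $\phi$ is onto $\mathbb{R}$, so $\phi(\mathbb{R}^{n})=\mathbb{R}\not\subseteq\mathbb{Z}$, whence $\overline{G}\neq\mathbb{R}^{n}$, i.e.\ $H+\mathbb{Z}u$ is not dense in $\mathbb{R}^{n}$.

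I do not anticipate a real obstacle: this is essentially a sharpness companion to Theorem~\ref{t:3}, which already guarantees that after adjoining one generator the best one can get is density in $vect(H)$. The only point needing a little care is the borderline subcase $p+r=n-1$ with $u\notin W$ (when $vect(G)$ may equal all of $\mathbb{R}^{n}$), which is exactly what the functional $\phi$ is built to handle. Alternatively, staying inside the paper's framework, in the case $u\notin W$ one notes that $G=H+\mathbb{Z}u$ is not a vector space --- otherwise $\frac{1}{2}u\in G$ gives $\frac{1}{2}u=h+ku$ with $h\in H$, $k\in\mathbb{Z}$, so $(1-2k)u=2h$ with $1-2k\neq0$, forcing $u=\frac{2}{1-2k}\,h\in vect(H)=W$, a contradiction --- hence $\Im(\widetilde{\mathrm{dim}}(G))=\dim(vect(G))-\max\{\dim V:\ V\ \mathrm{vector\ space},\ V\subseteq G\}\geq1$, and Corollary~\ref{C:CC01} concludes.
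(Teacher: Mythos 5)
Your proof is correct, and it takes a genuinely different route from the paper's. The paper bounds $\dim(vect(H+\mathbb{Z}u))\leq p+r+1\leq n$ and then splits on the imaginary part $r'$ of $\widetilde{\mathrm{dim}}(H+\mathbb{Z}u)$: when $r'\neq 0$ it invokes Corollary~\ref{C:CC01}, and when $r'=0$ it argues that $\mathbb{R}u\oplus vect(H)=\mathbb{R}^{n}$ and that the projection $p_{1}$ onto $\mathbb{R}u$ along $vect(H)$ sends $H+\mathbb{Z}u$ into $\mathbb{Z}u$, contradicting density. You instead split on whether $u\in vect(H)$, dispose of that case by containment in a proper closed subspace, and in the case $u\notin vect(H)$ build a continuous linear form $\phi$ killing $vect(H)$ with $\phi(u)=1$, so that $\phi(H+\mathbb{Z}u)\subseteq\mathbb{Z}$ while $\phi$ is onto $\mathbb{R}$. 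Your $\phi$ is essentially the paper's projection $p_{1}$ read in the coordinate along $u$, but defined on all of $\mathbb{R}^{n}$ by completing a basis, so it works uniformly without first forcing $vect(H)+\mathbb{R}u=\mathbb{R}^{n}$. What your version buys is independence from Corollary~\ref{C:CC01}: that corollary is stated for arbitrary (not necessarily closed) subgroups and, as stated, fails for examples such as $H=\mathbb{Z}+\mathbb{Z}\sqrt{2}\subset\mathbb{R}$, which is dense in $\mathbb{R}$ yet has $\widetilde{\mathrm{dim}}(H)=i$; your primary argument avoids this weak point entirely, whereas your optional fallback (showing $H+\mathbb{Z}u$ is not a vector space and then citing Corollary~\ref{C:CC01}) would reintroduce the same dependence and is best dropped.
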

\medskip

 Let $H=E+D$ and $K=E'+D'$ be two additive subgroup of
$\mathbb{R}^{n}$, where $E$ and $E'$  are two vector spaces, $D$
and $D'$ are two discrete groups. A map $f: H\longrightarrow K$
called \emph{homomorphism of closed additive group} if
$f=f_{1}\oplus f_{2}$ with $f_{1}: E\longrightarrow E'$ is a
linear map and $f_{2}: D\longrightarrow D'$ is a homomorphism of
group,  (i.e. $f(\lambda x+ p y)= \lambda f_{1}(x)+ p f_{2}(y)$
for every $\lambda\in \mathbb{R}$, $p\in \mathbb{Z}$, $x\in E$ and
$y\in D$). An homomorphism of closed additive group is called
\emph{isomorphism} of closed additive group if it is invertible.

\begin{thm}\label{t:5}  Let $H$ and $K$ be two closed additive subgroups of
$\mathbb{R}^{n}$ and $f:\  H\longrightarrow K$ be an homomorphism
of closed additive group. One has:\ \\ (i) If $f$ is injective
then $|\widetilde{\mathrm{dim}}(H)|\leq
|\widetilde{\mathrm{dim}}(K)|$.\ \\ (ii) If $f$ is surjective then
$|\widetilde{\mathrm{dim}}(H)|\geq
|\widetilde{\mathrm{dim}}(K)|$.\ \\ (iii) If $f$ is invertible
then $\widetilde{\mathrm{dim}}(H)= \widetilde{\mathrm{dim}}(K)$.\
\\ (iv)  $f(H)$ is a closed additive subgroup of $K$.\ \\
 (v)  $f^{-1}(L)$ is  a closed additive subgroup of $H$, for
every closed subgroup of $K$.
\end{thm}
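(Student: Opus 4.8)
The plan is to prove the five assertions of Theorem~\ref{t:5} in a logical order, exploiting the product structure $f = f_1 \oplus f_2$ together with the results already established, chiefly Corollary~\ref{Ct:1} and the characterization of $\widetilde{\mathrm{dim}}$ via the largest vector subspace contained in $H$ and the smallest containing it. Write $\widetilde{\mathrm{dim}}(H) = p + ir$ and $\widetilde{\mathrm{dim}}(K) = p' + ir'$, so $|\widetilde{\mathrm{dim}}(H)|^2 = p^2 + r^2$ and similarly for $K$; since $H = E + D$ and $K = E' + D'$ are closed with $E, E'$ the vector-space parts and $D, D'$ the discrete parts, we have $p = \dim E$, $r = \dim(\mathrm{vect}(D))$, $p' = \dim E'$, $r' = \dim(\mathrm{vect}(D'))$.

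First I would dispose of (iv) and (v), since the later parts implicitly use that $f(H)$ and $f^{-1}(L)$ are again closed subgroups (so that their complex dimensions are defined in the intended way). For (iv): $f(H) = f_1(E) + f_2(D)$; the image $f_1(E)$ is a vector subspace of $E'$, hence closed, and $f_2(D)$ is a subgroup of the discrete group $D'$, hence itself discrete and closed; a sum of a vector space and a discrete subgroup contained in a complementary-enough position is closed — more carefully, $f(H) \subseteq K$ and $f(H)$ is a subgroup whose vector part is $f_1(E)$, so by Waldschmidt's structure theorem (Theorem 2.1 in \cite{mW}) it suffices to see that $f_2(D)$ maps to a discrete set modulo $f_1(E)$, which holds because $D' $ is discrete and $f_2(D) \subseteq D'$. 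For (v): $f^{-1}(L)$ is a subgroup of $H$; writing $L = E_L + D_L$ with $E_L$ a subspace of $E'$ and $D_L$ discrete, one checks $f^{-1}(L) = f_1^{-1}(E_L) + (\text{a subgroup of } D)$, and $f_1^{-1}(E_L)$ is a subspace of $E$ while the $D$-part is a subgroup of the discrete group $D$, hence discrete; closedness follows as before. The main subtlety here is verifying discreteness of the relevant preimage/image in $D$, which I expect to be routine given that $D, D'$ are already discrete.

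Next I would prove the inequalities (i), (ii) and then (iii). For (i), injectivity of $f = f_1 \oplus f_2$ forces $f_1: E \to E'$ injective and $f_2: D \to D'$ injective. Injectivity of the linear map $f_1$ gives $\dim f_1(E) = \dim E = p$, so $p' = \dim E' \geq p$. Injectivity of $f_2$ on the discrete group $D$ means $f_2$ sends a free $\mathbb{Z}$-basis of $D$ (of rank $r$, since $\dim \mathrm{vect}(D) = r$ and $D$ is discrete, hence free of rank $r$) to $r$ elements of $D'$ that are $\mathbb{Z}$-independent; but $\mathbb{Z}$-independence in $\mathbb{R}^n$ for elements of a discrete group is equivalent to $\mathbb{R}$-linear independence (since a discrete group of rank $s$ spans an $s$-dimensional space), so $\mathrm{vect}(f_2(D))$ has dimension $r$ and thus $r' = \dim \mathrm{vect}(D') \geq r$. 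Hence $p'^2 + r'^2 \geq p^2 + r^2$, i.e. $|\widetilde{\mathrm{dim}}(K)| \geq |\widetilde{\mathrm{dim}}(H)|$. Part (ii) is the mirror image: surjectivity of $f$ gives $f_1(E) = E'$ and $f_2(D) = D'$, so $p' = \dim E' \leq \dim E = p$ (a linear image has dimension at most that of the source) and $\mathrm{vect}(D') = \mathrm{vect}(f_2(D))$ has dimension at most $r$, giving $r' \leq r$ and $|\widetilde{\mathrm{dim}}(K)| \leq |\widetilde{\mathrm{dim}}(H)|$. Part (iii) then follows: if $f$ is invertible it is both injective and surjective, so from the proofs of (i) and (ii) we get $p' = p$ and $r' = r$ coordinate-wise (not merely equality of moduli), whence $\widetilde{\mathrm{dim}}(H) = \widetilde{\mathrm{dim}}(K)$.

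The step I expect to be the main obstacle is the clean justification, in (i) and (ii), that for subgroups $D, D'$ of $\mathbb{R}^n$ that are \emph{discrete}, a group homomorphism $f_2: D \to D'$ is injective if and only if it carries an $\mathbb{R}$-independent set to an $\mathbb{R}$-independent set, and that $\dim \mathrm{vect}(f_2(D))$ is exactly the $\mathbb{Z}$-rank of $f_2(D)$. This uses the fact that a discrete subgroup of $\mathbb{R}^n$ is free of finite rank equal to the dimension of the real span, which is standard (again Waldschmidt's structure theorem), but one must be careful that $f_2$ a priori only respects the $\mathbb{Z}$-module structure, so "linear independence" must be argued through the $\mathbb{Z}$-module structure and then transferred to the real span — this is where a short lemma on discrete groups is the crux. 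Everything else reduces to elementary linear algebra together with the already-proved Corollary~\ref{Ct:1}, which handles the monotonicity of $|\widetilde{\mathrm{dim}}|$ under inclusion and can be invoked directly once (iv) and (v) place $f(H)$ and $f^{-1}(L)$ inside $K$ and $H$ respectively.
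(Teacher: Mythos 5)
Your proof is correct, and for the substantive parts (i)--(iii) it takes a genuinely different route from the paper's. The paper reduces (i) and (ii) to (iii): for (i) it uses Proposition~\ref{L:06543}(i) to view $f$ as an isomorphism of $H$ onto $f(H)\subset K$ and then invokes the monotonicity of $|\widetilde{\mathrm{dim}}|$ under inclusion (Corollary~\ref{Ct:1}(ii)); for (ii) it chooses an algebraic supplement $F$ of $Ker(f)$ with defect $0$, notes that $f_{/F}:F\longrightarrow K$ is an isomorphism, and applies Corollary~\ref{Ct:1}(ii) to $F\subset H$. You instead argue componentwise: injectivity (resp.\ surjectivity) of $f=f_{1}\oplus f_{2}$ forces injectivity (resp.\ surjectivity) of $f_{1}$ and of $f_{2}$ separately, and elementary linear algebra together with the fact that a discrete subgroup of $\mathbb{R}^{n}$ is free of rank equal to the dimension of its real span gives $p'\geq p$ and $r'\geq r$ (resp.\ $\leq$) directly. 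This is more elementary and in fact stronger, since you obtain the componentwise inequalities rather than only the inequality of moduli, and it sidesteps two fragile links in the paper's chain: the step in the proof of Corollary~\ref{Ct:1}(ii) where $r'\leq r$ is deduced from $p'\leq p$ and $p'+r'\leq p+r$ (which does not follow in general), and the unproved existence of a defect-$0$ supplement of $Ker(f)$ that the paper's argument for (ii) requires. Your treatment of (iv) and (v) is essentially the paper's Lemma~\ref{L:0033}; the one caveat, which the paper shares, is that in (v) an arbitrary closed subgroup $L\subset K$ decomposes as $E_{L}+D_{L}$ with $D_{L}$ not necessarily contained in $D'$, so the cleanest way to finish there is simply to observe that $f$ is continuous on the closed set $H$ and hence $f^{-1}(L)$ is closed.
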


\bigskip

\section{{\bf Proof of Theorem ~\ref{t:1} and Corollaries  ~\ref{C:CC01}, ~\ref{Ct:1}}}

\begin{lem}\label{L:1}$($\cite{mW}, Theorem 2.1$)$  Let $H$ be an additive subgroup of $\mathbb{R}^{n}$. Then
 there exist a vector space $E$ and a discrete additive group $D$ such that $\overline{H}=E+D$ with
 $E\oplus vect(D)=vect(H)$.
\end{lem}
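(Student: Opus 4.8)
The plan is to prove the structure theorem directly for the closed group $\overline{H}$, recovering Waldschmidt's Theorem~2.1. First I would observe that $vect(H)$ is a closed subspace containing $H$, hence $\overline{H}\subset vect(H)$, and that passing to the closure does not change the span, so $vect(\overline{H})=vect(H)$. Writing $G:=\overline{H}$ and $V:=vect(H)$, everything then takes place inside the finite-dimensional space $V$, in which $G$ is a closed subgroup with $vect(G)=V$; so I may assume $V=\mathbb{R}^{q}$ with $q=dim(vect(H))$ and that $G$ spans $V$. The next step is to produce the vector part $E$: let $E$ be a linear subspace of maximal dimension among those contained in $G$. Such a maximal $E$ exists because if $W_{1},W_{2}\subset G$ are subspaces then $W_{1}+W_{2}\subset G+G=G$ is again a subspace contained in $G$, so the family of subspaces contained in $G$ is stable under sums and admits an element of largest dimension.

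The heart of the argument is the following claim: \emph{a closed subgroup of a Euclidean space that contains no line $\mathbb{R}v$ with $v\neq 0$ is discrete.} I would prove the contrapositive. If $G'$ is a closed subgroup that is not discrete, then $0$ is not isolated, so there are $x_{k}\in G'\setminus\{0\}$ with $x_{k}\to 0$. Setting $v_{k}:=x_{k}/\norm{x_{k}}$ and using compactness of the unit sphere, pass to a subsequence with $v_{k}\to v$ and $\norm{v}=1$. For a fixed $t\in\mathbb{R}$ put $n_{k}:=\lfloor t/\norm{x_{k}}\rfloor\in\mathbb{Z}$; then $n_{k}x_{k}\in G'$ and $n_{k}x_{k}=(n_{k}\norm{x_{k}})v_{k}\to tv$ because $n_{k}\norm{x_{k}}\to t$ and $v_{k}\to v$. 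Since $G'$ is closed, $tv\in G'$ for every $t$, i.e. $\mathbb{R}v\subset G'$. This limiting/compactness step is where all the real work sits and is the main obstacle.

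Finally I would assemble the decomposition. Consider the quotient map $\pi\colon V\to V/E$. By maximality of $E$, the closed subgroup $\pi(G)$ of $V/E$ contains no line, so by the claim it is discrete; being discrete it is a lattice, and since $G$ spans $V$ its image spans $V/E$, so $\pi(G)$ is free abelian of rank $r=dim(V/E)=q-dim(E)$. Choosing a $\mathbb{Z}$-basis $\bar{d}_{1},\dots,\bar{d}_{r}$ of $\pi(G)$ and lifting it to $d_{1},\dots,d_{r}\in G$, set $D:=\mathbb{Z}d_{1}+\cdots+\mathbb{Z}d_{r}$. The $d_{i}$ are linearly independent modulo $E$, hence $vect(D)$ has dimension $r$ and meets $E$ only in $0$, giving $E\oplus vect(D)=V=vect(H)$; in particular $D$ is the $\mathbb{Z}$-span of linearly independent vectors and is therefore discrete. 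For the equality $G=E+D$, the inclusion $E+D\subset G$ is clear since $E,D\subset G$, while for $g\in G$ one writes $\pi(g)=\sum c_{i}\bar{d}_{i}$ with $c_{i}\in\mathbb{Z}$, whence $g-\sum c_{i}d_{i}\in\ker\pi\cap G=E$ and so $g\in E+D$. Thus $\overline{H}=G=E+D$ with $D$ discrete and $E\oplus vect(D)=vect(H)$, as required.
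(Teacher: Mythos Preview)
The paper does not prove this lemma at all: it is quoted verbatim from Waldschmidt (\cite{mW}, Theorem~2.1) and used as a black box, so there is no ``paper's proof'' to compare against. What you have written is the standard classical argument for the structure theorem of closed subgroups of $\mathbb{R}^{n}$, and it is essentially correct.

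Two small points are worth tightening. First, when you pass to the quotient you assert that $\pi(G)$ is a \emph{closed} subgroup of $V/E$; this needs a word of justification, since continuous images of closed sets need not be closed. The clean way is to note that $E\subset G$ implies $\pi^{-1}(\pi(G))=G+E=G$, so $\pi(G)$ is the complement of the open set $\pi(V\setminus G)$ (using that $\pi$ is open). Second, you invoke without comment that a discrete subgroup of a Euclidean space spanning that space is a free abelian group of full rank; this is itself a nontrivial step (the classification of discrete subgroups of $\mathbb{R}^{m}$), and strictly speaking it is the other half of Waldschmidt's theorem. If you want a fully self-contained argument you should either prove it or cite it separately. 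With those two additions your proof stands on its own and in fact supplies what the paper merely imports.
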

\medskip

\begin{proof}[Proof of Theorem~\ref{t:1}]  By  Lemma ~\ref{L:1}, we can write
$\overline{H}=E+D$ with $E$ is a vector space and $D$ is a
discrete additive subgroup of $\overline{H}$. Let $W$ be the
vector space generated by $D$, then $E\cap W=\{0\}$. Therefore we
define $p_{1}: E\oplus W\longrightarrow E$ the first projection
and $p_{2}: E\oplus W\longrightarrow W$ the second projection.
Now, $H\subset \overline{H}$, then $H=H_{1}+H_{2}$ where
$H_{1}=p_{1}(H)$ and $H_{2}=p_{2}(H)$. Since $E\cap W=\{0\}$, then
$\overline{H}=\overline{H_{1}}+\overline{H_{2}}=E+D$, which yields
that $\overline{H_{1}}=E$ and $H_{2}=D$ because $D$ is closed and
discrete so is $H_{2}$. \end{proof}

\medskip

\begin{proof}[Proof of Corollary~\ref{C:CC01}] The proof follows
directly from Theorem~\ref{t:1}.
\end{proof}
\bigskip

\begin{proof}[Proof of Corollary~\ref{Ct:1}] By Lemma~\ref{L:1}, we can write $\overline{H}=E+D$ with $E$
 is a vector space and $D$ is a discrete additive group  such that $E\oplus W=vect(H)$, where $W:= vect(D)$.
  Then by Theorem~\ref{t:1},  $H=H_{1}+D$ with $H_{1}$ is an additive subgroup dense in $E$. \ \\ (i) If
   $\widetilde{\mathrm{dim}}(H)=p'+ir$ with $p'\leq
dim(E)$ and $r=dim(W)$. As
$\widetilde{\mathrm{dim}}(\overline{H})=dim(E) + i dim(W)$, we
have the results.\ \\ (ii) Write
$\widetilde{\mathrm{dim}}(H)=p+ir$ and
$\widetilde{\mathrm{dim}}(H)=p'+ir'$. Then if $E$ (resp. $E'$) is
the smaller vector space contained in $H$ (resp. $K$) then
$p'=dim(E')\leq p=dim(E)$. Now,  if $W$ (resp. $W'$) is the
smaller vector space containing $H$ (resp. $K$) then
$p'+r'=dim(W')\leq p+r=dim(W)$. It follows that $r'\leq r$ and so
$p'^{2}+r'^{2}\leq p^{2}+r^{2}$.
\end{proof}
\bigskip

\section{{\bf Proof of Theorems ~\ref{t:2},~\ref{t:2} and Corollary ~\ref{c:3}}}

\begin{prop}\label{L:aaaaa1}$($\cite{AM}, Theorem 1.1$)$ Let $u_{1},\dots, u_{p}\in \mathbb{R}^{n}$, ($p\geq n+1$).
Suppose that $(u_{1},\dots, u_{n})$ be a basis of $\mathbb{R}^{n}$
and $u_{k}=\underset{i=1}{\overset{n}{\sum}}\alpha_{k,i}u_{i}$ for
every $n+1\leq k\leq p$. Then the additive group
$H=\underset{k=1}{\overset{p}{\sum}}\mathbb{Z}u_{k}$ is dense in
$\mathbb{R}^{n}$ if and only if the matrix $L(M_{H})=n$.
\end{prop}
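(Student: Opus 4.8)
The plan is to pass to the torus $T=\mathbb{R}^{n}/\Lambda$, where $\Lambda=\mathbb{Z}u_{1}+\cdots+\mathbb{Z}u_{n}\subseteq H$ is the lattice spanned by the first $n$ generators, and to convert density of $H$ into a statement about the characters of $T$, which finally becomes the rank condition on $M_{H}$. First I would record that, because $\Lambda\subseteq H$, the quotient map $\pi\colon\mathbb{R}^{n}\to T$ satisfies $\overline{H}=\pi^{-1}(\overline{\pi(H)})$: indeed $\pi^{-1}(\overline{\pi(H)})$ is closed and contains $H$, while if $\pi(x)\in\overline{\pi(H)}$ one may pick $h_{i}\in H$ and $\lambda_{i}\in\Lambda$ with $h_{i}-\lambda_{i}\to x$ and note $h_{i}-\lambda_{i}\in H$. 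Hence $H$ is dense in $\mathbb{R}^{n}$ if and only if the subgroup $\pi(H)$, which is generated by $\pi(u_{n+1}),\dots,\pi(u_{p})$, is dense in the compact group $T$.

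By Pontryagin duality (equivalently, by Kronecker's theorem on the torus), $\pi(H)$ is dense in $T$ if and only if no nontrivial character of $T$ vanishes on $\pi(H)$. Identifying $\widehat{T}$ with the dual lattice $\Lambda^{*}$, a character is $\chi_{m}$ for $m=(m_{1},\dots,m_{n})\in\mathbb{Z}^{n}$, where $\chi_{m}(\pi(\sum_{j}x_{j}u_{j}))=e^{2\pi i\,\ell_{m}(x)}$ and $\ell_{m}(\sum_{j}x_{j}u_{j}):=\sum_{j}m_{j}x_{j}$. Then $\chi_{m}$ vanishes on $\pi(H)$ exactly when $\ell_{m}(u_{k})\in\mathbb{Z}$ for every $n+1\le k\le p$, the case $k\le n$ being automatic since $\ell_{m}(u_{k})=m_{k}$.

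The core step is to decode $\ell_{m}(u_{k})\in\mathbb{Z}$ using the decomposition set up just before the statement: $u_{k}=\sum_{j\in I_{k}}\alpha_{k,j}\,q^{-1}u'_{k,j}+\sum_{j\notin I_{k}}t_{k,j}u_{j}$, where each $u'_{k,j}$ is an integer combination of $u_{1},\dots,u_{n}$ and each $t_{k,j}\in\mathbb{Q}$. Thus $\ell_{m}(u_{k})=\sum_{j\in I_{k}}\alpha_{k,j}\,q^{-1}\ell_{m}(u'_{k,j})+\sum_{j\notin I_{k}}t_{k,j}m_{j}$ is a rational-linear combination of $1$ and of the reals $\alpha_{k,j}$ ($j\in I_{k}$), which are rationally independent by construction; so $\ell_{m}(u_{k})\in\mathbb{Z}$ forces $\ell_{m}(u'_{k,j})=0$ for every $j\in I_{k}$ and, in addition, $\sum_{j\notin I_{k}}t_{k,j}m_{j}\in\mathbb{Z}$. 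Since the vectors $u'_{k,j}$ ($j\in I_{k}$, $n+1\le k\le p$) are precisely the columns of $M_{H}$, the first family of conditions says that $m$ lies in the orthogonal complement $V_{0}$ of the column space of $M_{H}$, a rational subspace of dimension $n-L(M_{H})$. If $L(M_{H})=n$, then $V_{0}=\{0\}$, no nonzero $m$ can meet the conditions, no nontrivial character kills $\pi(H)$, and $H$ is dense. If $L(M_{H})<n$, then $V_{0}\cap\mathbb{Z}^{n}$ is a nonzero lattice, and the homomorphism $m\mapsto(\sum_{j\notin I_{k}}t_{k,j}m_{j})_{n+1\le k\le p}$ from $V_{0}\cap\mathbb{Z}^{n}$ into $(\mathbb{Q}/\mathbb{Z})^{p-n}$ has finite image (finitely generated source, torsion target), hence nontrivial kernel; any nonzero $m$ in that kernel gives a nontrivial character of $T$ vanishing on $\pi(H)$, so $H$ is not dense. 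Together these yield the stated equivalence.

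I expect the main obstacle to be the bookkeeping in the core step: cleanly separating the ``irrational'' part of $\ell_{m}(u_{k})$ — governed by the columns $u'_{k,j}$ and hence by $\mathrm{rank}(M_{H})$ — from the ``rational residue'' $\sum_{j\notin I_{k}}t_{k,j}m_{j}$, and observing that this residue can never be an obstruction once $V_{0}$ is positive dimensional, because it suffices to replace $m$ by a suitable common multiple to clear all the denominators simultaneously.
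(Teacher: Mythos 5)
Your argument is correct, but there is nothing in the paper to compare it against: the paper offers no proof of this proposition at all, importing it as Theorem~1.1 of the unpublished preprint \cite{AM}. What you have written is therefore a self-contained substitute, and it holds up. The reduction to the torus $T=\mathbb{R}^{n}/\Lambda$ is legitimate because $\Lambda=\sum_{k=1}^{n}\mathbb{Z}u_{k}\subseteq H$, and the duality criterion (density of $\pi(H)$ in $T$ iff no nontrivial character annihilates it) is the standard Kronecker--Weyl statement on which any proof of this result must ultimately rest. The core computation is right: dividing \eqref{eq02} by $q$ gives $\ell_{m}(u_{k})=q^{-1}\sum_{j\in I_{k}}\ell_{m}(u'_{k,j})\,\alpha_{k,j}+\sum_{j\notin I_{k}}t_{k,j}m_{j}$, all coefficients rational, so the rational independence of $1$ and $\{\alpha_{k,j}\}_{j\in I_{k}}$ forces $\ell_{m}(u'_{k,j})=0$ for $j\in I_{k}$ whenever $\ell_{m}(u_{k})\in\mathbb{Z}$; thus the obstruction to density is exactly a nonzero integer vector orthogonal to all columns of $M_{H}$, i.e.\ $L(M_{H})<n$. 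Your handling of the rational residue in the non-dense direction is sound (the finitely-generated-torsion-image argument works), though as you note at the end it is simpler to just replace a nonzero $m_{0}\in V_{0}\cap\mathbb{Z}^{n}$ by $qm_{0}$, since $qt_{k,j}=p_{k,j}\in\mathbb{Z}$. A pleasant by-product of your proof, worth recording, is that whether $L(M_{H})=n$ cannot depend on the various choices ($I_{k}$, $q$, the $\gamma^{(k)}_{j,i}$) made in constructing $M_{H}$, a well-definedness point the paper never addresses.
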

\medskip

\medskip

\begin{lem}\label{L:02020} Let $H=\underset{k=1}{\overset{m}{\sum}}\mathbb{Z}u_{k}$, $u_{k}\in
\mathbb{R}^{n}$. Suppose that $vect(H)$ is generated by
$u_{1},\dots, u_{p}$, $p\leq m< n$. Let $v_{p+1},\dots, v_{n}\in
\mathbb{R}^{n}$ such that $(u_{1},\dots,u_{p},v_{p+1},\dots,
v_{n})$ forms a basis of $\mathbb{R}^{n}$. Then for every $1\leq
r\leq n-m$ we have $L(M_{H})=L(M_{H'})$ where $H'=H+
\underset{k=1}{\overset{r}{\sum}}\mathbb{Z}v_{p+k}$. In
particular,
$\Re(\widetilde{\mathrm{dim}}(\overline{H}))=\Re(\widetilde{\mathrm{dim}}(\overline{H'}))$.
\end{lem}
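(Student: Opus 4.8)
The plan is to show that, viewed as families of vectors of $\mathbb{R}^{n}$, the columns of $M_{H'}$ are exactly those of $M_{H}$; then $L(M_{H})=rank(M_{H})=rank(M_{H'})=L(M_{H'})$, and the ``in particular'' follows at once. I would first treat the degenerate case $m=p$: then $H$ and $H'$ are generated by linearly independent families (a basis of $vect(H)$, resp.\ of $vect(H')$), so both are discrete, no generators are left to be decomposed, and $M_{H}$, $M_{H'}$ are empty (rank $0$).

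Assume now $p<m$. To build $M_{H'}$ I would list the generators of $H'=\sum_{k=1}^{m}\mathbb{Z}u_{k}+\sum_{k=1}^{r}\mathbb{Z}v_{p+k}$ as $u_{1},\dots,u_{p},v_{p+1},\dots,v_{p+r},u_{p+1},\dots,u_{m}$; since $r\le n-m\le n-p$, the first $p+r$ of them lie in the fixed basis $(u_{1},\dots,u_{p},v_{p+1},\dots,v_{n})$ of $\mathbb{R}^{n}$ and hence form a basis of $vect(H')$, so the generators that get decomposed are precisely $u_{p+1},\dots,u_{m}$, the same ones used to build $M_{H}$. Fix $k$ with $p<k\le m$. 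Since $u_{k}\in vect(H)=vect(u_{1},\dots,u_{p})$, its expansion in the basis of $vect(H')$ has zero coefficients on $v_{p+1},\dots,v_{p+r}$ and the same coefficients $\alpha_{k,i}$ on $u_{1},\dots,u_{p}$ as before. Appending these zeros (rational multiples of $1$) to the list $\{1,\alpha_{k,1},\dots,\alpha_{k,p}\}$ leaves its longest rationally independent sublist unchanged, so $I_{k}\subseteq\{1,\dots,p\}$ may be taken the same for $H'$ as for $H$; and for each position $i$ occupied by one of $v_{p+1},\dots,v_{p+r}$, the vanishing of the corresponding coefficient of $u_{k}$ forces, by rational independence of $1$ and $\{\alpha_{k,l}\}_{l\in I_{k}}$, the associated $t_{k,i}$ and $\gamma^{(k)}_{i,\cdot}$, hence $m^{(k)}_{i,\cdot}$, to vanish. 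Consequently no $v$ occurs in $u'_{k,j}=qu_{j}+\sum_{i\notin I_{k}}m^{(k)}_{i,j}u_{i}$, so each $u'_{k,j}$ is literally a column of $M_{H}$; and the $v_{p+1},\dots,v_{p+r}$, being in basis positions, contribute no columns. Hence $M_{H'}$ and $M_{H}$ have the same columns and $L(M_{H})=L(M_{H'})$.

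For the ``in particular'' one may either quote Theorem~\ref{t:2} for $H$ and for $H'$ (giving $\Re(\widetilde{\mathrm{dim}}(\overline{H}))=L(M_{H})=L(M_{H'})=\Re(\widetilde{\mathrm{dim}}(\overline{H'}))$) or argue directly from Lemma~\ref{L:1}: writing $\overline{H}=E+D$ with $E\oplus vect(D)=vect(H)$, the group $D+\sum_{k=1}^{r}\mathbb{Z}v_{p+k}$ is discrete and meets $E$ only at $0$ (because $vect(v_{p+1},\dots,v_{p+r})\cap vect(H)=\{0\}$), so $\overline{H'}=\overline{\,\overline{H}+\sum_{k=1}^{r}\mathbb{Z}v_{p+k}\,}=E+\bigl(D+\sum_{k=1}^{r}\mathbb{Z}v_{p+k}\bigr)$, whose largest vector subspace is again $E$. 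The delicate point is the assertion that the construction of the vectors $u'_{k,j}$ is unaffected by adjoining the $v$-directions: one must check not only that $I_{k}$ can be kept unchanged (immediate, since only rational entries are added) but that the resulting integer vectors coincide exactly, and do not merely span the same subspace; this is where the explicit formula for $u'_{k,j}$ and the vanishing of the new coefficients are used. It is also worth recalling that $L(M_{H})$ does not depend on which basis of $vect(H)$ is selected among the generators, which is what legitimises comparing the two natural constructions above.
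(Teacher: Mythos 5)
Your proof is correct and follows essentially the same route as the paper: both arguments observe that the adjoined $v_{p+1},\dots,v_{p+r}$ join the reference basis while the coordinates of $u_{p+1},\dots,u_{m}$ on them vanish, so the matrix for $H'$ coincides with that for $H$ up to zero entries and the ranks agree (your tracking of the vectors $u'_{k,j}$ is in fact more careful than the paper's, which works only with the coordinate rows). One caveat: for the ``in particular'' you must use your second, direct argument via Lemma~\ref{L:1} (which is what the paper does), not the option of quoting Theorem~\ref{t:2}, since the paper's proof of Theorem~\ref{t:2} invokes the present lemma and that route would be circular.
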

\bigskip

\begin{proof} Write
$u_{k}=\underset{j=1}{\overset{p}{\sum}}\alpha_{k,j}u_{j}$ for
every $p+1\leq k\leq m$. Then $(\alpha_{k,1},\dots, \alpha_{k,p})$
are the coordinate of $u_{k}$ in the basis $(u_{1},\dots,u_{p})$
of $vect(H)$ and $(\alpha_{k,1},\dots, \alpha_{k,p},0,\dots,0)$
are the coordinate of $u_{k}$ in the basis
$(u_{1},\dots,u_{p},v_{p+1},\dots, v_{p+r})$ of $vect(H')$.
Therefore $$M_{H'}=\left[\begin{array}{cc}
  M_{H}, &\  0
\end{array}\right]\in M_{m-p,p+r}(\mathbb{R}).$$ It follows that
$L(M_{H})=L(M_{H'})$. Since
$\Re{(\widetilde{\mathrm{dim}}(\overline{H}))}=dim(\overline{H})$
and by using the definition of $dim(\overline{H})$ as the greatest
dimension of all vector subspaces contained in $\overline{H}$, we
obtain
$\Re(\widetilde{\mathrm{dim}}(\overline{H}))=\Re(\widetilde{\mathrm{dim}}(\overline{H'}))$.
 \end{proof}
 \medskip

\begin{lem}\label{L:02} Let $H=\underset{k=1}{\overset{m}{\sum}}\mathbb{Z}u_{k}$, $u_{k}\in \mathbb{R}^{n}$.
Then for every $P\in GL(n, \mathbb{R})$ we have
$L(M_{H})=L(M_{P(H)})$. In particular
$\widetilde{\mathrm{dim}}(\overline{H})=\widetilde{\mathrm{dim}}(\overline{P(H)})$.
\end{lem}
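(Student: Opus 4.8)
The plan is to reduce the statement about the invertible linear image $P(H)$ to the behavior of the matrix $M_H$ under a change of basis, and then invoke the two previously established facts: that $L(M_H)$ computes $\Re(\widetilde{\mathrm{dim}}(\overline H))$ via Proposition~\ref{L:aaaaa1} (and the definition of complex dimension), and that $\dim(\mathrm{vect}(H))$ is a linear-algebraic invariant. First I would fix a basis $(u_1,\dots,u_n)$ of $\mathbb{R}^n$ adapted to $H$ as in the construction preceding Theorem~\ref{t:2}, so that $H=\sum_{k=1}^m\mathbb{Z}u_k$ with $(u_1,\dots,u_n)$ a basis and the remaining generators expressed as $u_k=\sum_i\alpha_{k,i}u_i$. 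Applying $P\in GL(n,\mathbb{R})$, the images $(Pu_1,\dots,Pu_n)$ again form a basis of $\mathbb{R}^n$, and since $P$ is linear the coordinates of $Pu_k$ in the basis $(Pu_1,\dots,Pu_n)$ are exactly the same scalars $\alpha_{k,i}$. Hence the whole arithmetic data — the rational-independence pattern of $\{1,\alpha_{k,1},\dots,\alpha_{k,n}\}$, the index sets $I_k$, the rational coefficients $t_{k,j},\gamma^{(k)}_{j,i}$, the integer $q$, and the integers $m^{(k)}_{i,j},p_{k,j}$ — is unchanged.

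The next step is to track how the columns $u'_{k,j}=qu_j+\sum_{i\notin I_k}m^{(k)}_{i,j}u_i$ transform. Since these are $\mathbb{Z}$-linear combinations of the basis vectors with coefficients determined entirely by the $\alpha$'s (which we just saw are $P$-invariant), the corresponding columns for $P(H)$ are literally $P$ applied to the old columns: $(u'_{k,j})_{P(H)}=P\,(u'_{k,j})_H$. Writing $M_H$ for the matrix whose columns are the $(u'_{k,j})_H$ expressed in the standard basis of $\mathbb{R}^n$, this means $M_{P(H)}=P\cdot M_H$ (as matrices of coordinates in the fixed ambient basis, or equivalently after expressing everything in the standard basis). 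Here I should be a little careful about which coordinates $M_H$ records — the construction expresses $u'_{k,j}$ first in the basis $(u_1,\dots,u_n)$ and then, implicitly, in the standard basis of $\mathbb{R}^n$ (as in the Examples). Either way, passing from $H$ to $P(H)$ multiplies the coordinate matrix on the left by an invertible matrix, so $\mathrm{rank}(M_{P(H)})=\mathrm{rank}(M_H)$, i.e. $L(M_{P(H)})=L(M_H)$.

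Finally, for the "in particular" clause I would combine this with the identity $\mathrm{vect}(P(H))=P(\mathrm{vect}(H))$, which gives $\dim(\mathrm{vect}(P(H)))=\dim(\mathrm{vect}(H))=:q$ since $P$ is an isomorphism. By Theorem~\ref{t:2} we then get
$$\widetilde{\mathrm{dim}}(\overline{P(H)})=L(M_{P(H)})+i\bigl(q-L(M_{P(H)})\bigr)=L(M_H)+i\bigl(q-L(M_H)\bigr)=\widetilde{\mathrm{dim}}(\overline H).$$
The main obstacle I anticipate is bookkeeping rather than conceptual: one must verify that the index sets $I_k$ and all the associated rational data can genuinely be chosen to be the same for $H$ and $P(H)$ — this hinges on the observation that these depend only on the scalars $\alpha_{k,i}$ and their $\mathbb{Q}$-linear relations, which are preserved because $P$ acts the same way on the chosen basis and on the generators. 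Once that invariance is pinned down, the rank equality $M_{P(H)}=P\,M_H$ and the conclusion follow immediately.
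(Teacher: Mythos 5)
Your main argument for the rank equality is correct and is essentially the paper's: both proofs rest on the observation that $u_{k}$ and $Pu_{k}$ have the same coordinates $\alpha_{k,j}$ in the bases $(u_{1},\dots,u_{p})$ and $(Pu_{1},\dots,Pu_{p})$ respectively, so all the derived rational data (the sets $I_{k}$, the coefficients $t_{k,j}$, $\gamma^{(k)}_{j,i}$, etc.) coincide and the columns $u'_{k,j}$ built for $P(H)$ are the $P$-images of those built for $H$. (The paper states the conclusion as $M_{H}=M_{P(H)}$, i.e.\ it reads the columns in the adapted bases; you state it as $M_{P(H)}=P\,M_{H}$ in ambient coordinates. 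Either convention yields $L(M_{H})=L(M_{P(H)})$, and you correctly flag the ambiguity.)

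However, your derivation of the ``in particular'' clause is circular within the paper's logical architecture: you invoke Theorem~\ref{t:2}, but the paper's proof of Theorem~\ref{t:2} explicitly uses Lemma~\ref{L:02} (to pass from $L(M_{P(H_{1})})$ to $L(M_{H_{1}})$). The paper instead obtains $\widetilde{\mathrm{dim}}(\overline{H})=\widetilde{\mathrm{dim}}(\overline{P(H)})$ directly and independently of the rank statement: since $P$ is a linear homeomorphism of $\mathbb{R}^{n}$, one has $\overline{P(H)}=P(\overline{H})$ and $vect(P(H))=P(vect(H))$, and $P(E)$ is the maximal vector subspace contained in $\overline{P(H)}$ whenever $E$ is that of $\overline{H}$; hence both $p=\dim(E)$ and $q=\dim(vect(H))$, and therefore $p+i(q-p)$, are preserved. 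Replacing your appeal to Theorem~\ref{t:2} by this one-line direct argument repairs the proof.
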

\bigskip

\begin{proof} Suppose that $(u_{1},\dots,u_{p})$ is a basis of
$vect(H)$ with $1\leq p\leq m$ and write
$u_{k}=\underset{j=1}{\overset{p}{\sum}}\alpha_{k,j}u_{j}$ for
every $p+1\leq k\leq m$. Then
$Pu_{k}=\underset{j=1}{\overset{p}{\sum}}\alpha_{k,j}Pu_{j}$. It
follows that $u_{k}$ and $Pu_{k}$ have the same coordinate
respectively in the basis $(u_{1},\dots,u_{p})$ and
$(Pu_{1},\dots,Pu_{p})$. We conclude that $M_{H}=M_{P(H)}$.
Moreover, $P$ is viewed as an isomorphism, so
$\widetilde{\mathrm{dim}}(\overline{H})=\widetilde{\mathrm{dim}}(\overline{P(H)})$,
because $dim(vect(H))=dim(vect(P(H))$ and
$dim(vect(E))=dim(vect(P(E))$, where $E$ is the greater vector
space contained in $\overline{H}$.
 \end{proof}
 \medskip

\begin{proof}[Proof of Theorem~\ref{t:2}]  By  Lemma ~\ref{L:1}, we can write
$\overline{H}=E+D$ with $E$ is a vector space and $D$ a discrete
additive subgroup of $\overline{H}$. Let $W$ be the vector space
generated by $D$, then $E\cap W=\{0\}$. Denote by
$\widetilde{\mathrm{dim}}(\overline{H})=p+ir$. Since
$H=\underset{k=1}{\overset{m}{\sum}}\mathbb{Z}u_{k}$ then there
exists a basis $\mathcal{B}:=(v_{1},\dots, v_{n})$ of
$\mathbb{R}^{n}$ such that $(v_{1},\dots, v_{p})$ forms a basis of
$E$ and $D=\underset{k=1}{\overset{r}{\sum}}\mathbb{Z}v_{p+k}$.
Denote by $P\in GL(n, \mathbb{R})$ the matrix of basis change from
the canonical basis to $\mathcal{B}$ and $H'=P(H)$. By Lemma
~\ref{L:02}, $H=H_{1}+D$ with $H_{1}$ is an additive group dense
in $E$ (i.e. $\overline{H_{1}}=E$). Write $w_{k}=Pu_{k}$ for every
$k=1,\dots, m$, so
$H'=\underset{k=1}{\overset{m}{\sum}}\mathbb{Z}w_{k}$. Write
$P(H_{1})=\underset{k=1}{\overset{p'}{\sum}}\mathbb{Z}w_{i_{k}}$.
Then $\overline{P(H_{1})}=P(E)$, so by Proposition~\ref{L:aaaaa1},
$L(M_{P(H_{1})})=p$ and by lemma~\ref{L:02}, $L(M_{H_{1}})=p$,
hence by Lemma ~\ref{L:02020}, $L(M_{H})=p$. Since $r=q-p$ with
$q=dim(vect(H))$ then
$\widetilde{\mathrm{dim}}(H)=L(M_{H})+i(q-L(M_{H}))$.
 \end{proof}

\medskip

\begin{prop}\label{pp:2} Let $(u_{1},\dots, u_{n})$ be a basis of $\mathbb{R}^{n}$ then there exists $u\in \mathbb{R}^{n}$ such that
$H:=\underset{k=1}{\overset{n}{\sum}}\mathbb{Z}u_{k}+\mathbb{Z}u$
is dense in $\mathbb{R}^{n}$.
\end{prop}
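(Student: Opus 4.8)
The plan is to produce an explicit vector $u$ and then invoke the density criterion of Proposition~\ref{L:aaaaa1}. First I would pick real numbers $\theta_{1},\dots,\theta_{n}$ so that $1,\theta_{1},\dots,\theta_{n}$ are rationally independent --- such numbers exist (for instance $\theta_{j}=\sqrt{p_{j}}$ with $p_{1},\dots,p_{n}$ distinct primes, or more abstractly any part of a $\mathbb{Q}$-linearly independent family in $\mathbb{R}$) --- and set $u:=\underset{i=1}{\overset{n}{\sum}}\theta_{i}u_{i}$. Put $u_{n+1}:=u$ and $p:=n+1$; since $(u_{1},\dots,u_{n})$ is already a basis of $\mathbb{R}^{n}$, the matrix $M_{H}$ of the Introduction is defined for $H=\underset{k=1}{\overset{n+1}{\sum}}\mathbb{Z}u_{k}$.

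Next I would unwind the construction of $M_{H}$ for this choice. The coordinates of $u_{n+1}$ in the basis $(u_{1},\dots,u_{n})$ are $(\theta_{1},\dots,\theta_{n})$, and by hypothesis the longest rationally independent sublist of $\{1,\theta_{1},\dots,\theta_{n}\}$ is the whole list; hence $r_{n+1}=n$ and $I_{n+1}=\{1,\dots,n\}$. In particular there is no index $j\notin I_{n+1}$, so all the rational corrections $t_{k,j}$ and all the coefficients $\gamma^{(k)}_{i,j}$ are vacuous, one may take $q=1$, and \eqref{eq02} degenerates to $u_{n+1}=\underset{j\in I_{n+1}}{\sum}\theta_{j}u_{j}$ with $u'_{n+1,j}=u_{j}$ for every $j\in I_{n+1}$. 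Thus the columns of $M_{H}$ are precisely the coordinate vectors of $u_{1},\dots,u_{n}$ in the basis $(u_{1},\dots,u_{n})$, i.e. $M_{H}$ is the $n\times n$ identity matrix, so $L(M_{H})=rank(M_{H})=n$.

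Finally, applying Proposition~\ref{L:aaaaa1} to $u_{1},\dots,u_{n},u_{n+1}$ gives that $H=\underset{k=1}{\overset{n+1}{\sum}}\mathbb{Z}u_{k}$ is dense in $\mathbb{R}^{n}$, which is exactly the assertion. The only points needing genuine attention are the existence of $\theta_{1},\dots,\theta_{n}$ with $1,\theta_{1},\dots,\theta_{n}$ rationally independent and the correct reading of the degenerate case $I_{n+1}=\{1,\dots,n\}$ in the definition of $M_{H}$; the rest is bookkeeping. As an alternative I could first apply Lemma~\ref{L:02} with the change-of-basis matrix sending $(u_{1},\dots,u_{n})$ to the canonical basis, reducing to $H=\mathbb{Z}^{n}+\mathbb{Z}(\theta_{1},\dots,\theta_{n})$, and then quote Kronecker's density theorem directly; I prefer the route above since Proposition~\ref{L:aaaaa1} is already at hand.
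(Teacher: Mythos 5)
Your proof is correct and follows essentially the same route as the paper: choose $\alpha_{1},\dots,\alpha_{n}$ with $1,\alpha_{1},\dots,\alpha_{n}$ rationally independent, set $u=\sum_{k=1}^{n}\alpha_{k}u_{k}$, compute $M_{H}$ in the basis $(u_{1},\dots,u_{n})$ to get $L(M_{H})=n$, and conclude by Proposition~\ref{L:aaaaa1}. If anything, your explicit unwinding of the degenerate case $I_{n+1}=\{1,\dots,n\}$ (yielding $u'_{n+1,j}=u_{j}$ and hence $M_{H}$ of rank $n$) is more faithful to the definition of $M_{H}$ than the paper's shorthand $M_{H}=[\alpha_{1},\dots,\alpha_{n}]$.
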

\bigskip

\begin{proof} let $\alpha_{1},\dots, \alpha_{n}\in \mathbb{R}$
such that $1,\alpha_{1},\dots, \alpha_{n}$ are rationally
independent and
$u=\underset{k=1}{\overset{n}{\sum}}\alpha_{k}u_{k}$. By
Lemma~\ref{L:02}, $L(M_{H})$ is invariant by basis change, then in
the basis $(u_{1},\dots, u_{n})$ we have
$M_{H}=\left[\alpha_{1},\dots, \alpha_{n}\right]$, so
$L(M_{H})=n$. By applying Proposition~\ref{L:aaaaa1},
$\overline{H}=\mathbb{R}^{n}$.
\end{proof}
\medskip

\begin{proof}[Proof of Theorem~\ref{t:3}] Suppose that $(u_{1},\dots, u_{q})$ is a
 basis of $vect(H)$ and let $\alpha_{1},\dots,$ $ \alpha_{q}\in\mathbb{R}$
such that $1,\alpha_{1},\dots, \alpha_{q}$ are rationally
independent and
$u=\underset{k=1}{\overset{q}{\sum}}\alpha_{k}u_{k}$. Denote by
$H'=
\underset{k=1}{\overset{q}{\sum}}\mathbb{Z}u_{k}+\mathbb{Z}u$. By
Proposition~\ref{pp:2}, $\overline{H'}=vect(H)$.
\end{proof}
\medskip

\begin{proof}[Proof of Corollary~\ref{c:3}] Let $u\in \mathbb{R}^{n}$ and $H'=H+\mathbb{Z}u$. Since
$dim(vect(H))=p+r<n$ then $dim(vect(H'))\leq p+r+1\leq n$. Denote
by $dim(H')=p'+ir'$ with $p'\geq p$ and $r'\leq p+r+1-p'$. Then
there are two cases:\
\\ - if $r'\neq 0$ then by Corollary ~\ref{C:CC01}, $H'$
can not be dense in $\mathbb{R}^{n}$.\ \\ - if $r'=0$, so
$p'=p+r+1=n$ then $\mathbb{R}u\oplus vect(H)=\mathbb{R}^{n}$.
Denote by $p_{1}: \mathbb{R}u\oplus vect(H)\longrightarrow
\mathbb{R}u$ the first projection. Then $p_{1}(H')=\mathbb{Z}u$,
so $\mathbb{R}u=p_{1}(\overline{H'})\subset
\overline{p_{1}(H')}=\mathbb{Z}u$, a contradiction.
\end{proof}
\medskip

\section{{\bf Proof of Theorem ~\ref{t:5}}}

\begin{lem}\label{L:0033} Let $H$ and $K$ be two closed additive
subgroup of $\mathbb{R}^{n}$ and $f:H\longrightarrow K$ be a
homomorphism of closed additive group. Then:\ \\ (i) $f(H)$ is a
closed additive group of $\mathbb{R}^{n}$.\ \\ (ii) $f^{-1}(K)$ is
a closed additive group of $\mathbb{R}^{n}$.
\end{lem}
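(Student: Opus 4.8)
The plan is to reduce both parts to the structural decomposition furnished by Lemma~\ref{L:1} together with the defining form $f=f_{1}\oplus f_{2}$ of a homomorphism of closed additive group. Since $H$ and $K$ are closed, I would first write $H=E+D$ and $K=E'+D'$ with $E,E'$ vector spaces, $D,D'$ discrete additive groups, and $E\oplus vect(D)=vect(H)$, $E'\oplus vect(D')=vect(K)$; by definition $f_{1}:E\to E'$ is linear, $f_{2}:D\to D'$ is a group homomorphism, and $f(e+d)=f_{1}(e)+f_{2}(d)$. The one preliminary fact I would isolate is that $f$ is continuous on $H$: the map $H\to D$ sending $e+d\mapsto d$ is the restriction to $H$ of the continuous linear projection of $vect(H)$ onto $vect(D)$ along $E$, and since $D$ is discrete this map is locally constant on $H$; hence near every point $h_{0}=e_{0}+d_{0}\in H$ the map $f$ agrees with $e\mapsto f_{1}(e-d_{0}+d_{0})\ (=f_{1}(e)+f_{2}(d_{0}))$, an affine map of the $E$-coordinate, and is therefore continuous at $h_{0}$.

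For (i) I would compute $f(H)=f_{1}(E)+f_{2}(D)$. Here $f_{1}(E)$, the image of a linear map, is a vector subspace of $E'$, and $f_{2}(D)$, a subgroup of the discrete subgroup $D'$ of $\mathbb{R}^{n}$, is itself a discrete additive subgroup of $\mathbb{R}^{n}$ (a subgroup of a lattice is a lattice). Since $f_{1}(E)\subseteq E'$ and $vect(f_{2}(D))\subseteq vect(D')$, the relation $E'\cap vect(D')=\{0\}$ forces $f_{1}(E)\cap vect(f_{2}(D))=\{0\}$. It then remains to invoke the elementary fact that a sum $V+\Lambda$ of a vector subspace $V$ and a discrete subgroup $\Lambda$ with $V\cap vect(\Lambda)=\{0\}$ is closed: letting $\pi$ be the projection of $V\oplus vect(\Lambda)$ onto $vect(\Lambda)$ along $V$, one has $V+\Lambda=\pi^{-1}(\Lambda)$, which is closed in $V\oplus vect(\Lambda)$ as the preimage of the closed set $\Lambda$ under the continuous map $\pi$, hence closed in $\mathbb{R}^{n}$. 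Thus $f(H)$ is a closed additive subgroup of $\mathbb{R}^{n}$ (and, being contained in $K$, of $K$).

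For (ii), read in the form actually needed for Theorem~\ref{t:5}(v) — that $f^{-1}(L)$ is closed for every closed subgroup $L$ of $K$, the case $L=K$ giving $f^{-1}(K)=H$ which is closed by hypothesis — the argument is immediate: $f$ is a group homomorphism, so $f^{-1}(L)$ is a subgroup of $H$, hence an additive subgroup of $\mathbb{R}^{n}$; $f$ is continuous and $L$ is closed, so $f^{-1}(L)$ is closed in $H$; and $H$ is closed in $\mathbb{R}^{n}$, so $f^{-1}(L)$ is closed in $\mathbb{R}^{n}$.

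The main obstacle is part (i): continuity of $f$ alone does not make $f(H)$ closed, so the proof genuinely needs the explicit form $f(H)=f_{1}(E)+f_{2}(D)$, the verification that $f_{2}(D)$ is discrete, and the lemma that a subspace plus a discrete group with independent span is closed. Establishing continuity of $f$ through local constancy of the $D$-component on $H$ is a small technical point worth stating carefully; everything else is routine.
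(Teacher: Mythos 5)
Your proof is correct and follows essentially the same route as the paper: decompose $H=E+D$ and $K=E'+D'$ via Lemma~\ref{L:1}, write $f=f_{1}\oplus f_{2}$, and reduce part (i) to the identity $f(H)=f_{1}(E)+f_{2}(D)$. The only real difference is that you actually establish the facts the paper merely asserts as ``following directly'' --- namely that $f_{1}(E)+f_{2}(D)$ is closed (via discreteness of $f_{2}(D)\subset D'$, the independence $f_{1}(E)\cap vect(f_{2}(D))=\{0\}$ inherited from $E'\oplus vect(D')=vect(K)$, and the projection argument for closedness of a subspace plus a discrete group with independent span) and that $f$ is continuous on $H$ for part (ii) --- so your version is strictly more complete than the paper's.
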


\begin{proof} By Lemma~\ref{L:1} we can write $H=E+D$ and
$K=E'+D'$ with $E, E'$ are two vector spaces with dimension
respectively $p, \ p'$ and $D, D'$ are two discrete additive
groups with dimension respectively $ir, \ ir'$ such that $E\oplus
vect(D)=vect(H)$ and $E'\oplus vect(D')=vect(K)$. Write
$f=f_{1}\oplus f_{2}$ given by $f(x+y)=f_{1}(x)+f_{2}(y)$ for
every $x\in E$ and $y\in D$ with $f_{1}$ is linear and $f_{2}$ is
an homomorphism of additive group.
\
\\
 (i) The proof follows directly from the fact $f_{1}(E)\subset E'$ is  a vector
 space, $f^{-1}_{2}(D')\subset D$ is an additive group and
$f(E+D)=f_{1}(E)+f_{2}(D)$ is a closed additive group.\ \\ (ii)
The proof follows directly from the fact $f^{-1}_{1}(E')\subset E$
is a vector
 space, $f_{2}(D)\subset D'$ is an additive group and
$f^{-1}(E'+D')=f^{-1}_{1}(E)+f^{-1}_{2}(D)$ is a closed additive
group.\
\end{proof}
\bigskip

\begin{cor}\label{L:00443} Let $H$ and $K$ be two closed additive
subgroup of $\mathbb{R}^{n}$ and $f:H\longrightarrow K$ be a
homomorphism of closed additive group. Then:\ \\ (i) $Ker(f)$ is a
closed additive subgroup of $\mathbb{R}^{n}$.\ \\ (ii) $Im(f)$  is
a closed additive subgroup of $\mathbb{R}^{n}$.
\end{cor}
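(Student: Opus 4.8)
The plan is to derive both parts from Lemma~\ref{L:0033} together with the explicit splitting $f=f_{1}\oplus f_{2}$ that comes with the definition of a homomorphism of closed additive group. Part (ii) is immediate: $\mathrm{Im}(f)=f(H)$, and by Lemma~\ref{L:0033}(i) the set $f(H)$ is a closed additive subgroup of $\mathbb{R}^{n}$, which is all that is claimed.

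For part (i) I would first write, exactly as in the proof of Lemma~\ref{L:0033}, $H=E+D$ and $K=E'+D'$ with $E,E'$ vector spaces, $D,D'$ discrete additive groups, $E\oplus vect(D)=vect(H)$, $E'\oplus vect(D')=vect(K)$, and $f=f_{1}\oplus f_{2}$ with $f_{1}\colon E\to E'$ linear and $f_{2}\colon D\to D'$ a group homomorphism. The key step is to identify $\mathrm{Ker}(f)=\mathrm{Ker}(f_{1})+\mathrm{Ker}(f_{2})$, the sum being taken inside the direct sum $E\oplus vect(D)$. Indeed, if $x=x_{1}+x_{2}\in\mathrm{Ker}(f)$ with $x_{1}\in E$ and $x_{2}\in D$, then $f_{1}(x_{1})+f_{2}(x_{2})=0$ with $f_{1}(x_{1})\in E'$ and $f_{2}(x_{2})\in vect(D')$; since $E'\cap vect(D')=\{0\}$ this forces $f_{1}(x_{1})=0$ and $f_{2}(x_{2})=0$, so $x\in\mathrm{Ker}(f_{1})+\mathrm{Ker}(f_{2})$, and the reverse inclusion is clear because $H$ is closed, hence $E\subset H$ and $D\subset H$.

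Next I would observe that $\mathrm{Ker}(f_{1})$ is a vector subspace of $E$ (as $f_{1}$ is linear), hence closed, while $\mathrm{Ker}(f_{2})$ is a subgroup of the discrete group $D$, hence itself discrete and closed, and $\mathrm{Ker}(f_{1})\cap vect(\mathrm{Ker}(f_{2}))\subset E\cap vect(D)=\{0\}$. Therefore $\mathrm{Ker}(f)=\mathrm{Ker}(f_{1})+\mathrm{Ker}(f_{2})$ is the sum of a vector space and a discrete additive group meeting only at $0$, which by the structure recalled in Lemma~\ref{L:1} is a closed additive subgroup of $\mathbb{R}^{n}$. (Equivalently, one may simply note $\mathrm{Ker}(f)=f^{-1}(\{0\})$ and invoke the preimage statement of Lemma~\ref{L:0033}, since $\{0\}$ is a closed subgroup of $K$.)

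The only point that is not purely formal is the last one — that $\mathrm{Ker}(f_{1})+\mathrm{Ker}(f_{2})$ is closed — but this is harmless once one notes that the two summands span complementary subspaces, so that in an adapted basis of $\mathbb{R}^{n}$ this sum is just a product of a linear subspace with a lattice, which is manifestly closed. I expect no real obstacle here; the corollary is essentially a repackaging of Lemma~\ref{L:0033}.
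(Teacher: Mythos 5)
Your proposal is correct, and part (ii) coincides exactly with the paper's argument ($\mathrm{Im}(f)=f(H)$ plus Lemma~\ref{L:0033}(i)). For part (i) the paper simply writes $\mathrm{Ker}(f)=f^{-1}(0)$ and cites Lemma~\ref{L:0033}(ii); you instead give a direct, self-contained argument, decomposing $\mathrm{Ker}(f)=\mathrm{Ker}(f_{1})+\mathrm{Ker}(f_{2})$ via the splitting $f=f_{1}\oplus f_{2}$ and using $E'\cap vect(D')=\{0\}$ to see that the two components of a kernel element vanish separately, then observing that a vector subspace plus a discrete subgroup with transversal spans is closed. This extra work is not wasted: as literally stated, Lemma~\ref{L:0033}(ii) only asserts that $f^{-1}(K)$ (the full preimage, which is all of $H$) is closed, not that the preimage of an arbitrary closed subgroup such as $\{0\}$ is closed, so the paper's one-line citation is a slight overreach that your kernel decomposition actually repairs. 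Your parenthetical fallback to the preimage statement is exactly the paper's route; the main body of your argument is the more careful of the two.
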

\medskip

\begin{proof} The proof results directly from Lemma~\ref{L:0033},
since $Ker(f)=f^{-1}(0)$ and $Im(f)=f(H)$.
\end{proof}
\bigskip

Let $H$ and $K$  be two additive subgroups of $\mathbb{R}^{n}$. We
say that the algebraic sum of $H$ and $K$ is direct, denoted by
$H\oplus K=\mathbb{R}^{n}$, if $H+K=\mathbb{R}^{n}$ and
$\Re(\widetilde{\mathrm{dim}}(H\cap K))=0$. The group $K$ is said
the algebraic supplement of $H$ in $\mathbb{R}^{n}$  with defect
$s:=\Im(\widetilde{\mathrm{dim}}(H\cap K))$. We say that $K$ is an
algebraic  supplement of $H$ with defect $0$ if
$\Im(\widetilde{\mathrm{dim}}(H\cap K))=0$. in this case, we have
$H\cap K=\{0\}$. Let $H$ be a closed additive subgroup of
$\mathbb{R}^{n}$. Denote by $H=E+D$ with $E$ is a vector space
equipped by the usual topology and $D$ is a discrete group.
\bigskip

\begin{prop}\label{L:06543} Let $H$ and $K$ be two closed additive
subgroup of $\mathbb{R}^{n}$ and $f:H\longrightarrow K$ be a
homomorphism of closed additive group. Then:\ \\ (i) if $f$ is
injective then $f:H\longrightarrow f(H)$ is an isomorphism of
closed additive group of $\mathbb{R}^{n}$.\
\\ (ii) if $f$ is
surjective and $F$ is a closed additive group supplement  of
$Ker(f)$ in $H$ with defect $0$, then the restriction $f_{/F}:
F\longrightarrow K$ of $f$ to $F$ is an isomorphism of closed
additive group of $\mathbb{R}^{n}$. Moreover,
$\Re{(\widetilde{\mathrm{dim}}(K))}=\Re{(\widetilde{\mathrm{dim}}(H))}-
\Re{(\widetilde{\mathrm{dim}}(Ker(f)))}$.
\end{prop}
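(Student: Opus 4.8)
The plan is to prove (i) directly, to deduce (ii) from it, and to read off the dimension formula from rank--nullity applied to the linear part of $f$. Throughout, write $H=E+D$, $K=E'+D'$ and $f=f_{1}\oplus f_{2}$ with $f_{1}\colon E\to E'$ linear and $f_{2}\colon D\to D'$ a group homomorphism, as in the definition; since $E\cap vect(D)=\{0\}$ and $E'\cap vect(D')=\{0\}$, these sums are direct as abelian groups and $f(x+y)=f_{1}(x)+f_{2}(y)$ for $x\in E$, $y\in D$.

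For (i): if $f$ is injective then so are $f_{1}$ and $f_{2}$ (for instance $f_{1}(x)=0$ gives $f(x)=0$, hence $x=0$), so $f_{1}\colon E\to f_{1}(E)$ is a linear isomorphism and $f_{2}\colon D\to f_{2}(D)$ a group isomorphism. By Corollary~\ref{L:00443}, $f(H)$ is a closed additive subgroup of $\mathbb{R}^{n}$, and $f(H)=f_{1}(E)+f_{2}(D)$ exhibits it as a vector space plus a discrete group: indeed $f_{1}(E)\cap vect(f_{2}(D))\subset E'\cap vect(D')=\{0\}$, and $f_{2}(D)$ is discrete being a subgroup of $D'$. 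This sum being direct, the set map $f^{-1}\colon f(H)\to H$, $f^{-1}(x'+y')=f_{1}^{-1}(x')+f_{2}^{-1}(y')$, is well defined and equals $f_{1}^{-1}\oplus f_{2}^{-1}$, a homomorphism of closed additive group; hence $f\colon H\to f(H)$ is invertible, i.e. an isomorphism of closed additive group.

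For (ii): by Corollary~\ref{L:00443}, $Ker(f)$ is closed, and the hypothesis that $F$ is an algebraic supplement of $Ker(f)$ in $H$ with defect $0$ unwinds, by the definitions, to $F+Ker(f)=H$ and $F\cap Ker(f)=\{0\}$. Then $f_{/F}$ is injective (its kernel is $F\cap Ker(f)=\{0\}$), and surjective onto $K$ (if $k=f(h)$ with $h=x+y$, $x\in F$, $y\in Ker(f)$, then $f_{/F}(x)=k$). Assuming for the moment that $f_{/F}$ is itself a homomorphism of closed additive group, part (i) applied to it (its image being all of $K$) shows it is an isomorphism of closed additive group. For the identity, note that $f(e+d)=0$ forces $f_{1}(e)=0$ and $f_{2}(d)=0$ (again because $f_{1}(e)\in E'$, $f_{2}(d)\in D'$ and $E'\cap vect(D')=\{0\}$), so $Ker(f)=Ker(f_{1})+Ker(f_{2})$ with $Ker(f_{1})$ a subspace of $E$ and $Ker(f_{2})$ a discrete subgroup of $D$; hence $\Re(\widetilde{\mathrm{dim}}(Ker(f)))=\dim Ker(f_{1})$. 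Surjectivity of $f$ gives $f_{1}(E)+f_{2}(D)=E'+D'$, and comparing identity components on both sides yields $f_{1}(E)=E'$. Rank--nullity for $f_{1}\colon E\to E'$ then reads $\dim E=\dim Ker(f_{1})+\dim E'$, i.e. $\Re(\widetilde{\mathrm{dim}}(H))=\Re(\widetilde{\mathrm{dim}}(Ker(f)))+\Re(\widetilde{\mathrm{dim}}(K))$, which is the asserted formula.

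The main obstacle is the postponed step: showing $f_{/F}$ is a homomorphism of closed additive group, i.e. finding a decomposition $F=E_{F}+D_{F}$ ($E_{F}$ a vector space, $D_{F}$ discrete, $E_{F}\oplus vect(D_{F})=vect(F)$) on which $f$ is linear into $E'$, respectively a group homomorphism into $D'$. The difficulty is that the discrete part of the closed subgroup $F\subset H=E\oplus D$ need not lie inside $D$, so $f$ need not split along the intrinsic decomposition of $F$. I would handle this topologically: $f$ is continuous, so $f_{/F}\colon F\to K$ is a continuous bijective homomorphism between closed subgroups of $\mathbb{R}^{n}$, which are locally compact and second countable, hence $f_{/F}$ is a homeomorphism; it therefore carries the identity component $E_{F}$ of $F$ onto the identity component $E'$ of $K$, so $E_{F}=(f_{/F})^{-1}(E')$, and with $D_{F}:=(f_{/F})^{-1}(D')$ we obtain $F=E_{F}+D_{F}$, $D_{F}$ discrete, $E_{F}\cap D_{F}=\{0\}$, and $f(D_{F})\subset D'$ by construction. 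Since $E_{F}$ is the largest vector subspace of $F$ it lies in $E$, whence $f|_{E_{F}}=f_{1}|_{E_{F}}$ is linear into $E'$. Finally, for $y\in D_{F}$, writing $y=y_{E}+y_{D}$ with $y_{E}\in E$, $y_{D}\in D$, the element $f_{1}(y_{E})=f(y)-f_{2}(y_{D})$ lies in $E'\cap vect(D')=\{0\}$, so $y_{E}\in Ker(f_{1})$; thus $D_{F}\subset Ker(f_{1})+D$, and together with $E_{F}\subset E$, $E\cap vect(D)=\{0\}$ and $E_{F}\cap Ker(f_{1})\subset F\cap Ker(f)=\{0\}$ this gives $E_{F}\cap vect(D_{F})=\{0\}$. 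Hence $F=E_{F}+D_{F}$ is a legitimate vector-space-plus-discrete decomposition and $f_{/F}=(f_{1}|_{E_{F}})\oplus(f|_{D_{F}})$ is a homomorphism of closed additive group, which closes the gap.
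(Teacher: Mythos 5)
Your proposal is correct, and for part (i) it follows essentially the same route as the paper: show $f_{1}$ and $f_{2}$ are separately injective and invert them componentwise, $f^{-1}=f_{1}^{-1}\oplus f_{2}^{-1}$ (you add the worthwhile check that $f_{1}(E)+f_{2}(D)$ really is a legitimate vector-space-plus-discrete decomposition of $f(H)$, which the paper takes for granted). For part (ii) the paper is much terser than you are: it only observes that defect $0$ forces $Ker(f)\cap F=\{0\}$, concludes that $f_{/F}$ is bijective hence ``invertible'', and then simply asserts the chain $\Re(\widetilde{\mathrm{dim}}(K))=\Re(\widetilde{\mathrm{dim}}(F))=\Re(\widetilde{\mathrm{dim}}(H))-\Re(\widetilde{\mathrm{dim}}(Ker(f)))$ without argument. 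You correctly identify the real gap there --- bijectivity alone does not make $f_{/F}$ an \emph{isomorphism of closed additive group} in the paper's sense, since one must first exhibit a splitting $F=E_{F}+D_{F}$ along which $f_{/F}$ decomposes --- and your topological repair (open mapping theorem for locally compact second countable groups, pulling back the identity component and the discrete part of $K$, then checking $E_{F}\subset E$, $D_{F}\subset Ker(f_{1})+D$ and $E_{F}\cap vect(D_{F})=\{0\}$) is sound. Your derivation of the dimension formula from $Ker(f)=Ker(f_{1})+Ker(f_{2})$, $f_{1}(E)=E'$ and rank--nullity for $f_{1}$ is also a genuine proof of something the paper merely states, and has the advantage of not routing through $F$ at all. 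In short: same skeleton, but your version supplies two substantive steps the paper omits.
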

\medskip

\begin{proof} (i) It is clear that $f:H\longrightarrow f(H)$ is
invertible. Write $H=E+D$ as above and $f=f_{1}\oplus f_{2}$ with
$f_{1}:E\longrightarrow f_{1}(E)$ is linear and $f_{2}:
D\longrightarrow f_{2}(D)$ is an homomorphism of group. Then
$f_{1}$ and $f_{2}$ are also invertible and
$f^{-1}=f^{-1}_{1}\oplus f^{-1}_{2}$. It follows that $f^{-1}$ is
an homomorphism of closed additive group.\ \\ (ii) Let $F$ is a
closed additive group supplement  of $Ker(f)$ in $H$ with defect
$0$. Then $$\Re{(\widetilde{\mathrm{dim}}(Ker(f)\cap
F))}=\Im{(\widetilde{\mathrm{dim}}(Ker(f)\cap F)))}=0,$$ so
$Ker(f)\cap F=\{0\}$. It follows that $f_{/F}$ is injective, so it
is invertible because it is surjective. On the other hand, we have
$\Re{(\widetilde{\mathrm{dim}}(K))}=\Re{(\widetilde{\mathrm{dim}}(F))}
=\Re{(\widetilde{\mathrm{dim}}(H))}-\Re{(\widetilde{\mathrm{dim}}(Ker(f)))}$.
\end{proof}
\medskip

\begin{proof}[Proof of Theorem~\ref{t:5}] The proof of (iv) and (v)
results from Lemma~\ref{L:0033}. \ \\ Proof of (iii) The proof
follows directly from the fact that $f$ is an isomorphism of
closed additive group.

 \ \\ Proof of
(i): Since $f$ is injective then by Proposition~\ref{L:06543},(i)
we have $f:H\longrightarrow f(H)$ is an isomorphism of closed
additive group. Then by (iii),
$\widetilde{\mathrm{dim}}(H)=\widetilde{\mathrm{dim}}(f(H))$. By
Lemma ~\ref{L:0033},(i), $f(H)$ is a closed additive subgroup of
$K$, so by Corollary~\ref{Ct:1},
$|\widetilde{\mathrm{dim}}(f(H))|\leq
|\widetilde{\mathrm{dim}}(K)|$.\ \\   Proof of (i): Since $f$ is
injective then by Proposition~\ref{L:06543},(i) we have
$f:H\longrightarrow f(H)$ is an isomorphism of closed additive
group. Then by (iii),
$\widetilde{\mathrm{dim}}(H)=\widetilde{\mathrm{dim}}(f(H))$. By
Lemma ~\ref{L:0033},(i), $f(H)$ is a closed additive subgroup of
$K$, so by Corollary~\ref{Ct:1},
$|\widetilde{\mathrm{dim}}(f(H))|\leq
|\widetilde{\mathrm{dim}}(K)|$. \ \\   Proof of (ii): Since $f$ is
surjective then by Proposition~\ref{L:06543},(ii), for every
supplement $F$ of $Ker(f)$ in $H$ with 0 defect, we have
$f_{F}:F\longrightarrow K$ is an isomorphism of closed additive
group. Then by (iii),
$\widetilde{\mathrm{dim}}(F)=\widetilde{\mathrm{dim}}(K)$. By
Corollary~\ref{Ct:1}, $|\widetilde{\mathrm{dim}}(F)|\leq
|\widetilde{\mathrm{dim}}(H)|$, So
$|\widetilde{\mathrm{dim}}(K)|\leq |\widetilde{\mathrm{dim}}(H)|$.
\end{proof}
\bigskip

\bibliographystyle{amsplain}
\vskip 0,4 cm

\end{document}